\crefname{equation}{}{}
\newtheorem{theorem}{Theorem}[section]
\newtheorem{lemma}[theorem]{Lemma}
\newtheorem{corollary}[theorem]{Corollary}
\newtheorem{proposition}[theorem]{Proposition}
\newcommand{\bi}{\textbf{i}}
\DeclareMathAlphabet{\mathpzc}{OT1}{pzc}{m}{it}
\newcommand{\fp}[1][]{ \ifthenelse{\isempty{#1}}{\mathpzc{p}}{\mathpzc{p}(#1)} }
\newcommand{\mA}{\mathcal{A}}
\newcommand{\CC}{\mathbb{C}}
\newcommand{\NN}{\mathbb{N}}
\newcommand{\bS}{\mathbb{S}}
\newcommand{\bD}{\mathbb{D}}
\newcommand{\rleq}{r^{\text{\tiny $\leq$}}}
\newcommand{\rle}{r^{\text{\tiny $<$}}}
\newcommand{\rstar}{r^{\star}}
\title{An Erd\H{o}s--Fuchs Theorem for Ordered Representation Functions}
\date{}
\author{
	Gonzalo Cao-Labora\thanks{ Universitat Polit\`ecnica de Catalunya, Facultat de Matem\`atiques i Estad\'istica, Pau Gargallo 5, 08028 Barcelona, Spain. E-mail: {\tt gonzalo.cao@estudiant.upc.edu}. This work was done under the research collaboration grant 2018 COLAB 00175 from AGAUR (Catalunya).}
	\and
	Juanjo Ru{\'e}\thanks{Universitat Polit\`ecnica de Catalunya and Barcelona Graduate School of Mathematics, Department of Mathematics, Edifici Omega, 08034 Barcelona, Spain. E-mail: {\tt juan.jose.rue@upc.edu}. Supported by the Spanish Ministerio de Econom\'{i}a y Competitividad project MTM2017-82166-P, and the Mar\'ia de Maetzu research grant MDM-2014-0445.}
	\and
	Christoph Spiegel\thanks{Universitat Polit\`ecnica de Catalunya, Department of Mathematics, Edificio Omega, 08034 Barcelona, Spain, and Barcelona Graduate School of Mathematics. E-mail: {\tt christoph.spiegel@upc.edu}. Supported by the Spanish Ministerio de Econom\'{i}a y Competitividad project MTM2017-82166-P and the Mar\'ia de Maetzu research grant MDM-2014-0445, and by an FPI grant under the project MTM2014-54745-P.}
}
\begin{document}
\maketitle

\begin{abstract}
	Let $k\geq 2$ be a positive integer. We study concentration results for the ordered representation functions $\rleq_k(\mA,n) = \# \big\{ (a_1   \leq   \dots   \leq   a_k) \in \mA^k : a_1+\dots+a_k = n \big\}$ and $
	 	\rle_k(\mA,n) = \# \big\{ (a_1 < \dots < a_k) \in \mA^k : a_1+\dots+a_k = n \big\}$ for any infinite set of non-negative integers $\mA$. Our main theorem is an Erd\H{o}s--Fuchs-type result for both functions: for any $c > 0$ and $\star \in \{\leq,<\}$ we show that
	\begin{equation*}
		\sum_{j = 0}^{n} \Big( \rstar_k (\mA,j) - c \Big)=  o\big(n^{1/4}\log^{-1/2}n\big)
	\end{equation*}
	is not possible. We also show that the mean squared error
	\begin{equation*}
		E^\star_{k,c}(\mA,n)=\frac{1}{n} \sum_{j = 0}^{n} \Big( \rstar_k(\mA,j) - c \Big)^2
	\end{equation*}
	satisfies $\limsup_{n \to \infty} E^\star_{k,c}(\mA,n)>0$. These results extend two theorems  for the non-ordered representation function proved by Erd\H{o}s and Fuchs in the case of $k=2$ (J. of the London Math. Society 1956).
\end{abstract}

%%% INTRODUCTION %%%%

\section{Introduction}
Let $\mA \subseteq \NN_0$ be a set of non-negative integers. For a given $k\geq 2$, we define its \emph{representation function} as
\begin{equation*}
	r_{k}(\mA,n) = \# \big\{ (a_1,\dots,a_k) \in \mA^k : a_1+\dots+a_k = n \big\}.
\end{equation*}
One of the most prominent conjectures in the theory of representation functions is the Erd\H{o}s--Tur\'an conjecture for additive bases~\cite{Erd-Turan41}: it states that if $r_{2}(\mA,n)>0$ for all $n$ large enough, then $\limsup_{n \to \infty} r_{2}(\mA,n)=\infty$ must hold. This conjecture remains unsolved, though some things are known about the behaviour of $r_{2}(\mA,n)$: a classical result of Erd\H{o}s and Fuchs, Theorem~1 in \cite{ErdFuc56}, shows that $\sum_{j = 0}^{n} \big( r_{2}(\mA,j) - c \big)=  o\big(n^{1/4}\log^{-1/2} n\big)$ cannot hold, that is $r_{2}(\mA,n)$ cannot be concentrated around a constant value. Jurkat (seemingly unpublished) and later Montgomery and Vaughan~\cite{MontVau90} improved upon the result of  Erd\H{o}s and Fuchs by showing that even $\sum_{j = 0}^{n} \big( r_{2}(\mA,j) - c \big)=  o(n^{1/4})$ cannot hold. On the other hand, Ruzsa~\cite{Ruzsa97} showed, through a probabilistic argument, that a sequence satisfying $\sum_{j = 0}^{n} \big( r_{2}(\mA,j) - c \big)=  O\big(n^{1/4} \log n\big)$ does exist.

There have been several generalisations of the result of Erd\H{o}s and Fuchs for the case of more than two summands, see~\cite{Hay81, RoSan13, Tang09}, and also for the case where the summands come from different sequences, see~\cite{DaiPan14, Hor04, Sark80}. In this paper we study a different variant: given $k \geq 2$ and some set $\mA \subseteq \NN_0$, we define its \emph{ordered representation functions} as
\begin{equation*}
	\rleq_k(\mA,n) = \# \big\{ (a_1,\dots,a_k) \in \mA^k : a_1   \leq   \dots   \leq   a_k, \:a_1+\dots+a_k = n \big\}
\end{equation*}
and
\begin{equation*}
	\rle_k(\mA,n) = \# \big\{ (a_1,\dots,a_k) \in \mA^k : a_1 < \dots < a_k, \:a_1+\dots+a_k = n \big\}.
\end{equation*}
Our first result shows that the ordered representation functions also satisfy an Erd\H{o}s--Fuchs-type result.
\begin{theorem} \label{thm:erdos_fuchs}
	Let $k \geq 2$, $c > 0$ and $\star \in \{\leq,<\}$. There does not exist any set $\mA \subseteq \NN_0$ satisfying
	\begin{equation*}\label{eq:Erd-Fuchs}
		\sum_{j = 0}^{n} \left( \rstar_k(\mA,j) - c \right)=  o\left(n^{1/4}\log^{-1/2}\!n\right).
	\end{equation*}
\end{theorem}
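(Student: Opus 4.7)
The plan is to adapt the classical Erd\H{o}s--Fuchs argument via generating functions and Parseval's identity, combined with a symmetric-function decomposition that reduces the ordered case to the unordered one. First I would set up the generating functions. Let $f(z) = \sum_{a \in \mA} z^a$ and $R(z) = \sum_{n \geq 0} \rstar_k(\mA, n) z^n$. By the cycle index of the symmetric group $S_k$---equivalently, by the standard exponential formulas for the generating functions of multisets and sets of size $k$---one obtains a decomposition
\begin{equation*}
R(z) = \frac{f(z)^k}{k!} + T(z),
\end{equation*}
where $T(z)$ is a fixed polynomial in $f(z), f(z^2), \ldots, f(z^k)$ such that every monomial of $T(z)$ contains a factor $f(z^i)$ for some $i \geq 2$. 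For $k = 2$ this reduces to the familiar identity $R(z) = (f(z)^2 \pm f(z^2))/2$.

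Next I would set up the Parseval argument. Assume for contradiction that $\varepsilon_n := \sum_{j=0}^n (\rstar_k(\mA, j) - c) = o(n^{1/4} \log^{-1/2} n)$. Then $\varepsilon_n$ is, up to a bounded additive constant, the $n$-th Taylor coefficient of
\begin{equation*}
H(z) := \frac{R(z)}{1 - z} - \frac{c}{(1 - z)^2}.
\end{equation*}
Parseval's identity on the circle $|z| = r$ with $r = 1 - 1/N$ yields
\begin{equation*}
\frac{1}{2\pi} \int_0^{2\pi} |H(re^{i\theta})|^2 \, d\theta = \sum_{n \geq 0} (\varepsilon_n + O(1))^2 r^{2n} = o\!\left(\frac{N^{3/2}}{\log N}\right),
\end{equation*}
using the standard estimate $\sum_n n^{1/2} \log^{-1} n \cdot r^{2n} \asymp N^{3/2} \log^{-1} N$.

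To extract a contradiction, I would multiply the definition of $H(z)$ by $(1 - z)$ and substitute the decomposition above to obtain
\begin{equation*}
\frac{f(z)^k}{k!} = \frac{c}{1 - z} - T(z) + (1 - z) H(z).
\end{equation*}
The hypothesis forces $|\mA \cap [0, N]| \sim (k!\, c\, N)^{1/k}$, which pins down the growth of $f(r)$, $f(r^i)$ and $f(r)^k$. A Fourier-analytic comparison on $|z| = r$---following the original Erd\H{o}s--Fuchs device (and its later refinements by Jurkat and Montgomery--Vaughan) of integrating against a suitable weight and exploiting the non-negativity of the coefficients of $f(z)^k$---then produces a lower bound on an appropriate integral involving $f(z)^k$ that is incompatible with the $o(N^{3/2}/\log N)$ estimate above.

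The chief difficulty is controlling the correction term $T(z)$. For $k = 2$ it is merely $\pm f(z^2)/2$, so the analysis is essentially identical to the classical Erd\H{o}s--Fuchs setting; for general $k \geq 3$, $T(z)$ is a sum of monomials $\prod_i f(z^i)^{m_i}$ with $\sum_i i\, m_i = k$ and at least one $m_i > 0$ for some $i \geq 2$, and each of these must be shown to contribute negligibly to the relevant $L^2$-integral. This should follow from H\"older's inequality together with the observation that, for $i \geq 2$, the $L^2$-mass of $f(z^i)$ on the circle $|z| = r$ is governed by $|\mA \cap [0, N/i]| \sim (k!\, c\, N/i)^{1/k}$, which is strictly smaller than the corresponding mass of $f(z)$. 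The two cases $\star \in \{\leq, <\}$ differ only in the signs of the summands of $T(z)$ and can therefore be treated uniformly.
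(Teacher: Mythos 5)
Your broad strategy coincides with the paper's: encode $\rstar_k(\mA, n)$ via the cycle-index/symbolic method as $R(z) = f(z)^k/k! + T(z)$, derive the Parseval bound $\sum_n \varepsilon_n^2 r^{2n} = o\big((1-r)^{-3/2}\log^{-1}(1/(1-r))\big)$, and integrate a rearrangement of $R(z)(1-z) = c + (1-z)^2 H(z)$ against a smoothing weight along $|z| = r$. However, your explanation for why $T(z)$ contributes negligibly is incorrect, and this is precisely where the technical work lies.

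You attribute the smallness to the $L^2$-mass of $f(z^i)$ for $i \geq 2$ being ``strictly smaller'' than that of $f(z)$, i.e.\ to $|\mA\cap[0,N/i]|$ being smaller than $|\mA\cap[0,N]|$. But $\int_{\bS_r}|f(z^i)|^2 d\mu = f(r^{2i})$ and the ratio $f(r^{2i})/f(r^2)$ tends to the \emph{constant} $i^{-1/k}$, not to $0$; a per-factor constant saving does not make $T(z)$ negligible relative to $f(z)^k$. The genuine source of decay is that each monomial of $T$ has strictly fewer than $k$ factors: if $\prod_{j=1}^m f(z^{i_j})$ appears with $\sum_j i_j = k$ and some $i_j \geq 2$, then $m \leq k-1$, so the monomial is a strictly lower power of the dominant quantity. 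Making this observation usable in the presence of the weight $h_M(z)^2 = \big((1-z^M)/(1-z)\big)^2$ is the technical core of the paper (\cref{lem:lemita}, \cref{prop:em_prop}, \cref{corollary:of}): exploiting non-negativity of Taylor coefficients and the factorization $h_M(z)=h_{M/i}(z)h_i(z^{M/i})$, one proves
\begin{equation*}
\int_{\bS_r} \big|f_\mA(z^i)^m h_M(z)^2\big|\, d\mu \;\leq\; i^2 \left(\int_{\bS_r} \big|f_\mA(z)^k h_M(z)^2\big|\, d\mu \right) \left(\int_{\bS_r} |f_\mA(z)|^2\, d\mu\right)^{-(k-m)/2}
\end{equation*}
whenever $i \mid M$ and $m \leq k$; since $\int_{\bS_r}|f_\mA|^2\,d\mu = f_\mA(r^2)\to\infty$ for infinite $\mA$, this gives the needed $o(\cdot)$ bound when $m<k$. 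Without this lemma, or an equivalent weighted H\"older argument that draws its saving from $m<k$ rather than from $i\geq 2$, your sketch does not close. You also leave unspecified the weight length $M = M(r)$, which must be tuned to roughly $\varepsilon\,(1-r)^{-1/2}\log^{-1}(1/(1-r))$ with $\varepsilon$ small so that the three terms in the final inequality actually clash.
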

Our proof of this result differs from that of Theorem~1 in \cite{ErdFuc56} in two ways: while the proof of Erd\H{o}s and Fuchs is based on integrating an appropriate encoding of the problem along a small arc, our proof uses an	 integration with a smoothing function around the whole circle, which is reminiscent of the techniques used in~\cite{Sark80}. Additionally, in order to obtain an encoding of $\sum_{n = 0}^{\infty} \rstar_k(\mA,n) \, z^n$ in terms of the generating function $\sum_{a \in \mA} z^a$, we need to use the Symbolic Method. Through this encoding and the tools developed to prove \cref{thm:erdos_fuchs}, we are also able to prove a generalisation of another result of Erd\H{o}s and Fuchs, namely Theorem~2 in \cite{ErdFuc56}. For any $c \geq 0$ and $\mA \subseteq \NN_0$, let
\begin{equation*}
	E^\star_{k,c}(\mA,n) = \frac{1}{n}\sum_{j=0}^n \left( \rstar_k (\mA, j) - c \right)^2
\end{equation*}
denote the \emph{mean squared error}.
\begin{theorem} \label{thm:square_error}
	Let $k \geq 2$, $c \geq 0$, $\star \in \{\leq,<\}$ and $\mA = \{a_i \}_{i \in \NN} \subseteq \NN_0$ with $a_1 < a_2 < a_3 < \ldots$. If either $c > 0$ or the set $\{a_s/s^k\}_{s\in \NN}$ is bounded, then $\limsup_{n\to \infty} E^\star_{k,c}(\mA,n) > 0$.
\end{theorem}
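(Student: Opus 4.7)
Our plan is to argue by contradiction: suppose $E^\star_{k,c}(\mA,n) \to 0$, so that $\sum_{j=0}^n (\rstar_k(\mA,j) - c)^2 = o(n)$. Cauchy--Schwarz immediately yields $T_1(n) := \sum_{j \leq n}\rstar_k(\mA,j) = cn + o(n)$ and, expanding the square, $T_2(n) := \sum_{j \leq n}\rstar_k(\mA,j)^2 = c^2 n + o(n)$.

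Two of the three regimes are elementary. If $c = 0$ and $\{a_s/s^k\}$ is bounded, then $|\mA \cap [0,N]| \geq \alpha N^{1/k}$ for some $\alpha > 0$; counting ordered $k$-tuples with all coordinates at most $n/k$ forces $T_1(n) \gg n$, and since $r^2 \geq r$ for $r \in \NN_0$ we obtain $E^\star_{k,0}(\mA,n) \geq T_1(n)/n \gg 1$, contradicting $E^\star \to 0$. If instead $0 < c < 1$, the same inequality $r^2 \geq r$ gives $c^2 n + o(n) = T_2(n) \geq T_1(n) = cn + o(n)$, forcing $c \geq 1$ and again contradicting the hypothesis.

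The main difficulty is the remaining range $c \geq 1$, where the elementary inequalities above no longer suffice. Here I would use the generating-function machinery developed for the proof of \cref{thm:erdos_fuchs}. The Symbolic Method writes $\fP^\star_k(z) = \sum_n \rstar_k(\mA, n) z^n$ as an explicit polynomial in $f(z), f(z^2), \ldots, f(z^k)$, where $f(z) := \sum_{a \in \mA} z^a$. The asymptotic $T_1(n) \sim cn$ combined with the trivial bound $T_1(n) \leq |\mA \cap [0,n]|^k$ implies $|\mA \cap [0,N]| = \Theta(N^{1/k})$, so each $f(r^j)$ diverges at a precise polynomial rate as $r \to 1^-$, producing singularities of $\fP^\star_k$ at every $q$-th root of unity with $q \leq k$; these cannot be matched by the single pole of $c/(1-z)$ at $z = 1$. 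Setting $g(z) := \fP^\star_k(z) - c/(1-z)$ and $r = 1 - 1/N$, Parseval's identity combined with the hypothesis gives
\begin{equation*}
 \int_0^{2\pi} |g(re^{i\theta})|^2\,d\theta \;=\; 2\pi \sum_{n \geq 0} (\rstar_k(\mA, n) - c)^2 r^{2n} \;=\; o(N).
\end{equation*}
Exploiting the asymmetric singular structure described above through the same smoothing-kernel estimates that underlie the proof of \cref{thm:erdos_fuchs}, one aims to derive a contradictory lower bound on (a suitably localized version of) this integral.

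The main obstacle is precisely to make this lower bound quantitative. Each non-trivial $q$-th root of unity $\zeta$ contributes a spike of $|g(r\zeta\, e^{i\phi})|$ whose height grows polynomially in $N$ while its width shrinks as $\phi$ varies, and a delicate balancing of these two effects—together with control of $g$ away from the spikes via the explicit symbolic-method expansion of $\fP^\star_k$—is required to ensure that the accumulated $L^2$-mass exceeds the slack $o(N)$ afforded by the hypothesis. This balance is the analogue, for squared error, of the central estimate that powers \cref{thm:erdos_fuchs}.
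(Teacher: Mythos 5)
Your treatment of the two easy regimes is essentially sound, but the whole weight of the theorem is in the $c \geq 1$ case, and there you have a plan, not a proof — and the plan points in a harder direction than is actually needed.

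First, a smaller point: you handle $0 < c < 1$ by comparing $T_2 \sim c^2 n$ with $T_1 \sim c n$ via $r^2 \geq r$, but non-integer $c \geq 1$ then gets lumped into the hard case. The paper's observation is cleaner and covers all non-integer $c$ at once: since $\rstar_k(\mA,n) \in \NN_0$, one has $(\rstar_k(\mA,n)-c)^2 \geq \min\{(c-\lfloor c\rfloor)^2, (\lceil c\rceil - c)^2\} > 0$ pointwise, so $E^\star_{k,c}(\mA,n)$ is bounded below by a positive constant with no asymptotics required.

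Now the real gap. For integer $c \geq 1$ you correctly derive $|\mA \cap [0,N]| = \Theta(N^{1/k})$, equivalently $a_s = O(s^k)$, and you write down Parseval. But then you propose to locate spikes of $|g(re^{i\theta})|$ near each $q$-th root of unity with $q \leq k$ and to balance their heights against their widths — a localized arc-type estimate. This is not only unfinished, it is unnecessary, and a key reason it is hard is that the contributions $f(z^i)$ for $i \geq 2$ are not actually where the $L^2$ mass comes from: by the paper's Lemma~\ref{lemma:integral_without_h}, $\int_{\bS_r} |f_{\mA}(z^i)^m|\,d\mu = o_r(\int_{\bS_r}|f_{\mA}(z)^k|\,d\mu)$ for $m < k$, i.e.\ every term in the symbolic-method expansion other than $f_{\mA}(z)^k/k!$ is negligible \emph{in $L^1$ over the whole circle}. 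The paper's argument thus avoids arcs entirely: after Parseval it passes from $L^2$ to $L^1$,
\begin{equation*}
\Big(\sum_{n\geq 0}(\rstar_k(\mA,n)-c)^2 r^{2n}\Big)^{1/2} \geq \int_{\bS_r}\Big|\sum_{\bi\in S(k)}\varepsilon_\star(\bi)\,f_{\mA}(z^{i_1})\cdots f_{\mA}(z^{i_m}) - \frac{c}{1-z}\Big|\,d\mu,
\end{equation*}
then peels off the negligible terms, bounds the main term below by $f_{\mA}(r^2)^{k/2}$ via H\"older (Lemma~\ref{lemma:parseval_consequence}), and uses $a_s \leq Ds^k$ to get $f_{\mA}(r^2)^{k/2} = \Omega((1-r)^{-1/2})$. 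The pole of $c/(1-z)$ at $z=1$ only contributes $\int_{\bS_r}\frac{c\,d\mu}{|1-z|} = O(-\log(1-r))$ (Lemma~\ref{lemma:elliptic_integral}), which is dominated. This gives $\sum(\rstar_k(\mA,n)-c)^2 r^{2n} = \Omega((1-r)^{-1/2})$, and a simple coefficient comparison against $\sum n r^{2n}$ finishes the proof. To complete your argument you would need to either (a) discover this global $L^2\to L^1$ route, or (b) genuinely carry out the spike-by-spike lower bound you describe, which is a substantially harder estimate than anything in the paper and which you explicitly leave open.
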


Let us mention that a direct corollary of either \cref{thm:erdos_fuchs} or \cref{thm:square_error} is the following result.
\begin{corollary} \label{cor:non_constant}
	Let $k \geq 2$ and $\star \in \{\leq,<\}$. There does not exist any infinite set $\mA \subseteq \NN_0$ such that $\rstar_k(\mA,n)$ becomes constant for $n$ large enough.
\end{corollary}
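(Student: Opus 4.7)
The plan is to argue by contradiction, deriving the corollary directly from Theorem~\ref{thm:erdos_fuchs}. Suppose there is an infinite $\mA \subseteq \NN_0$ and a non-negative integer $c$ such that $\rstar_k(\mA, n) = c$ for every $n \geq N_0$. I would split on whether $c = 0$ or $c \geq 1$, since Theorem~\ref{thm:erdos_fuchs} is stated only for the latter.

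For the case $c = 0$, I would use nothing more than the infinitude of $\mA$. Writing $\mA = \{a_1 < a_2 < a_3 < \ldots\}$, when $\star$ is $\leq$ the constant tuple $(a_i, \ldots, a_i)$ witnesses $\rleq_k(\mA, ka_i) \geq 1$, while when $\star$ is $<$ the strictly increasing tuple $(a_i, a_{i+1}, \ldots, a_{i+k-1})$ witnesses $\rle_k(\mA, a_i + \cdots + a_{i+k-1}) \geq 1$. In either scenario, taking $i$ large enough produces some $n \geq N_0$ at which $\rstar_k(\mA, n) \geq 1$, contradicting $c = 0$.

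For $c \geq 1$, the key observation is that under the eventual-constancy hypothesis,
\begin{equation*}
\sum_{j=0}^n \big(\rstar_k(\mA, j) - c\big) = \sum_{j=0}^{N_0 - 1} \big(\rstar_k(\mA, j) - c\big) \qquad \text{for all } n \geq N_0,
\end{equation*}
which is a fixed constant independent of $n$ and in particular is $o\!\left(n^{1/4}\log^{-1/2} n\right)$. This directly contradicts Theorem~\ref{thm:erdos_fuchs}, yielding the corollary. (An entirely analogous deduction from Theorem~\ref{thm:square_error} works too: the mean squared error reduces to $O(1/n) \to 0$, violating the $\limsup$ conclusion.)

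The main, and essentially only, obstacle is that both Theorem~\ref{thm:erdos_fuchs} and Theorem~\ref{thm:square_error} require $c > 0$, so the case of an eventually zero representation function has to be excluded by the separate elementary argument above. This step is routine, since an infinite $\mA$ always produces arbitrarily large $n$ admitting at least one ordered representation, and no deeper machinery is needed.
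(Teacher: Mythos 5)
Your proof is correct and takes the same route the paper intends: the paper merely asserts that the corollary follows ``directly'' from \cref{thm:erdos_fuchs} or \cref{thm:square_error} without spelling out the deduction. Since both theorems require $c>0$, your elementary observation that an infinite $\mA$ forces arbitrarily large $n$ with $\rstar_k(\mA,n)\geq 1$ (constant tuples for $\leq$, consecutive tuples for $<$) is precisely the small but necessary step to exclude $c=0$, and for $c\geq 1$ your reduction of the partial sums to an eventual constant, violating \cref{thm:erdos_fuchs}, is exactly the intended argument.
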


For $k = 2$ this was already proved by Dirac~\cite{Dirac_1951}. To our knowledge, this seemingly easy result was not previously known for $k \geq 3$.

\paragraph{Outline.} In \cref{sec:preliminaries} we establish some preliminary results, first encoding the ordered representation functions through the generating functions of the given set, then establishing a dominant term under integration and finally stating some asymptotic bounds. In \cref{sec:P-T1} and \cref{sec:P-T2} we will respectively prove \cref{thm:erdos_fuchs} and \cref{thm:square_error}. Finally, in \cref{sec:furt-res} we discuss further research directions and open problems.

%%% PRELIMINARIES %%%

\section{Preliminaries} \label{sec:preliminaries}

Let us start by fixing some notation and giving an outline of the results we will establish in this section. Note that throughout this paper we let $\NN = \{1,2,3,\ldots\}$ and $\NN_0 = \NN \cup \{0\}$. We have already used the usual big $O$ and little $o$ notation in the introduction to state asymptotic results. For any two functions $f$ and $g$ we write $f = O(g)$ if there exists some $C > 0$ such that $f   \leq   Cg$ and $f = \Omega(g)$ if $g = O(f)$. We also write $f = o(g)$ if $f/g \to 0$ and $f = \omega(g)$ if $g = o(f)$. Here the arguments of the functions, and therefore the variable with respect to which the asymptotic statements are made, will either be $n \in \NN$ tending to infinity or $r \in (1/2,1)$ tending to $1$. Note that throughout the paper we assume that $r > 1/2$ simply to avoid any unimportant behaviour that occurs when $r$ is close to $0$. We will always specify with respect to which of the two we are making our asymptotic statements through an indexed $n$ or $r$.

In \cref{sec:encoding} we will find a way to express $\sum_{n = 0}^{\infty} \rstar_k(\mA,n) \, z^n$ through the generating function of the given set for $\star \in \{\leq,<\}$. Here $[z^k] f(z)$ will refer to the coefficient $c_k$ of a given formal power series $f(z) = \sum_{k \geq 0} c_k z^k$. In \cref{sec:tec-lemmas} we will establish a central statement that allows one to handle the case of $k > 2$. Here we will write
	\begin{equation*}
		\bS_r=\{z\in\CC: |z|=r\} \quad \text{and} \quad \bD=\{z\in\CC: |z|<1\}
	\end{equation*}
	for the circle of radius $r$, which as previously stated always lies in $(1/2,1)$ and will tend to $1$, and the open disc of radius $1$. The variable $z$ will always lie in $\bD$. Integrals along the circle $\bS_r$ will be taken with respect to the measure $d\mu = |dz|/(2\pi r) = d\theta/(2\pi)$ where $z = re^{i\theta}$. In particular, this implies that $\int_{\bS_r}d\mu =1$. We will also use the smoothing function
	\begin{equation*}
		h_M(z) = 1 + z + \ldots + z^{M-1} = \frac{ 1-z^M }{1-z}
	\end{equation*}
	where $M \in \NN$. Finally, \cref{sec:asymptotic_estimates} will contain some general asymptotic bounds that will be needed in the proofs of \cref{thm:erdos_fuchs} and \cref{thm:square_error}.

\subsection{Encoding the ordered representation functions}\label{sec:encoding}
The \emph{generating function} of a set $\mA \subseteq \NN_0$ is the formal power series
\begin{equation*}
	f_{\mA}(z) = \sum_{a \in \mA} z^a.
\end{equation*}
Observe that $f_{\mA}(z)$ is analytic in the open disc $\bD$ and that it is a strictly increasing in the real interval $(0,1)$. We use the terminology of Chapter~I in~\cite{FlajSedge} and define the \emph{size} of an element $a \in \mA$ to be $|a|=a$. For any $k$-tuple ${\bf a} = (a_1,\dots, a_k) \in \mA^k$ we also let its size be the sum of the sizes of the components, that is $|{\bf a}| = |a_1|+\dots+|a_k|$. Using this notation, $r(\mA,n)$ is simply the number of tuples in $\mA^k$ of size $n$. This interpretation directly gives
	\begin{equation*} \label{eq:r_genfun}
		\sum_{n = 0}^{\infty} r_k(\mA,n)\, z^n = f_{\mA}(z)^k,
	\end{equation*}
	which has been the basis of many proofs regarding the non-ordered counting function.

Expressing $\sum_{n = 0}^{\infty} \rstar_k(\mA,n)\, z^n$ in terms of $f_{\mA}(z)$ for $\star \in \{\leq,<\}$ is slightly more involved than this: for $\rleq_k(\mA,n)$ we need the \emph{multiset construction}, where one can use the same integer several times and the order of the summands does not matter, and for $\rle_k(\mA,n)$ we need the \emph{powerset construction}, where one cannot use the same integer several times and the order of the summands also does not matter.  See Section~I.2. in~\cite{FlajSedge} for precise definitions. Adapting Theorem~I.3. in~\cite{FlajSedge} to our setting therefore gives the expressions
	\begin{equation} \label{eq:rleq_genfun}
		\sum_{n = 0}^{\infty} \rleq_k(\mA,n) \, z^n= [u^k] \exp \left( \sum_{i=1}^{\infty} \frac{1}{i} u^i f_{\mA}(z^i)\right)
	\end{equation}
	as well as
	\begin{equation} \label{eq:rle_genfun}
		\sum_{n = 0}^{\infty} \rle_k(\mA,n) \, z^n= [u^k] \exp \left( \sum_{i=1}^{\infty} \frac{(-1)^{i+1}}{i} u^i f_{\mA}(z^i) \right).
	\end{equation}
	Writing
	\begin{equation*} \label{eq:Sk}
		S(k) = \big\{ \bi = (i_1,\dots,i_m) \in \NN^m : 1   \leq   m   \leq   k \text{ and } i_1 + \dots + i_m = k \big\}
	\end{equation*}
	and expanding the Taylor series of expressions \cref{eq:rleq_genfun} and \cref{eq:rle_genfun}, we get that
	\begin{equation} \label{eq:rleq_genfun_2}
		\sum_{n = 0}^{\infty} \rleq_k(\mA,n) \, z^n = \sum_{\bi \in S(k)} \frac{f_{\mA}(z^{i_1}) \cdots f_{\mA}(z^{i_{m}})}{i_1 \cdots i_{m} \cdot m!}
	\end{equation}
	as well as
	\begin{equation} \label{eq:rle_genfun_2}
		\sum_{n = 0}^{\infty} \rle_k(\mA,n) \, z^n  =  \sum_{\bi \in S(k)} (-1)^{m+k} \,  \frac{f_{\mA}(z^{i_1}) \cdots f_{\mA}(z^{i_{m}})}{i_1 \cdots i_{m} \cdot m!}.
	\end{equation}
	%
	%For example, for $k=2$ and $k=3$ we obtain that
	%%
	%\begin{align*}
	%\sum_{n = 0}^{\infty} r_{+,2}(\mA,n) \, z^n=& \frac{f_{\mA}(z)^2}{2}+\frac{f_{\mA}(z^2)}{2},&  \sum_{n = 0}^{\infty} r_{-,2}(\mA,n) \, z^n=& \frac{f_{\mA}(z)^2}{2}-\frac{f_{\mA}(z^2)}{2},\\
	%%
	%\sum_{n = 0}^{\infty} r_{+,3}(\mA,n) \, z^n=& \frac{f_{\mA}(z)^3}{6}+\frac{f_{\mA}(z^2)f_{\mA}(z)}{2} +\frac{f_{\mA}(z^3)}{3},& \sum_{n = 0}^{\infty} r_{-,3}(\mA,n) \, z^n=& \frac{f_{\mA}(z)^3}{6}-\frac{f_{\mA}(z^2)f_{\mA}(z)}{2} +\frac{f_{\mA}(z^3)}{3}.
	%\end{align*}
	%
Let us generalise our notation and write $\varepsilon_{\leq}(\bi) = 1/(i_1 \, \cdots \, i_{m} \cdot m!)$ as well as  $\varepsilon_{<}(\bi) = (-1)^{m+k} / (i_1 \, \cdots \, i_{m} \cdot m!)$ for any $\bi = (i_1,\dots, i_m)\in S(k)$, so that \cref{eq:rleq_genfun_2} and \cref{eq:rle_genfun_2} both become
	\begin{equation} \label{eq:rstar_genfun}
		\sum_{n = 0}^{\infty} \rstar_k(\mA,n) \, z^n = \sum_{\bi \in S(k)} \varepsilon_{\star}(\bi) \, f_{\mA}(z^{i_1}) \cdots f_{\mA}(z^{i_{m}})
	\end{equation}
	for $\star \in \{\leq,<\}$. Finally, in both of the proofs of \cref{thm:erdos_fuchs} and \cref{thm:square_error} we will argue that the term coming from the $k$-tuple ${\bf 1} = (1,\ldots,1) \in S(k)$ will asymptotically be dominant and therefore we let 
	\begin{equation*} \label{eq:S0k}
		S_0(k) = \big\{ \bi = (i_1,\dots,i_m) \in \NN^m : 1   \leq   m < k \text{ and } i_1 + \dots + i_m = k \big\}
	\end{equation*}
	denote the set of all remaining terms. Also note that $\varepsilon_{\leq}({\bf 1}) = \varepsilon_{<}({\bf 1}) = 1/k!$.

\subsection{The dominant term under integration} \label{sec:tec-lemmas}

The goal of this section is to formalise the previously mentioned fact that, when integrating over the right-hand side of \cref{eq:rstar_genfun}, the dominant term as $r$ tends to $1$ will come from the $k$-tuple ${\bf 1} \in S(k)$ whereas all terms coming from $S_0(k)$ will be negligible. We first prove the following lemma which establishes an application of Parseval's identity as well as H\"older's inequality that will be used throughout the rest of the paper. It is this technique that allows us to extend previously established Erd\H{o}s--Fuchs-type results to the case of $k > 2$.

\begin{lemma} \label{lemma:parseval_consequence}
	If $g(z) = \sum_{n=0}^{\infty} b_n z_n$ has non-negative integer coefficients and is analytic in $\bD$, then
	\begin{equation} \label{eq:parseval_consequence_1}
		\int_{\bS_r} |g(z)|^2 d\mu = \sum_{n=0}^{\infty} |b_n|^2 r^{2n}
	\end{equation}
	and if $k \geq 2$, then
	\begin{equation*} \label{eq:parseval_consequence_2}
		\int_{\bS_r} |g(z)|^k d\mu \geq g(r^2)^{k/2}.
	\end{equation*}
\end{lemma}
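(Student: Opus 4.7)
The first identity is Parseval's theorem in disguise. Parametrizing $z = re^{i\theta}$ so that $d\mu = d\theta/(2\pi)$, I would expand
\[
|g(z)|^2 = g(z)\,\overline{g(z)} = \sum_{m,n \geq 0} b_m b_n \, r^{m+n} e^{i(m-n)\theta},
\]
using that the $b_n$ are real. Integrating term by term against $d\theta/(2\pi)$ kills every off-diagonal contribution, leaving $\sum_{n \geq 0} b_n^2 r^{2n}$, which is \eqref{eq:parseval_consequence_1}. (Termwise integration is justified because $g$ is analytic in $\bD$ and $r<1$, so the series converges absolutely and uniformly on $\bS_r$.)

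For the second inequality, my plan is to chain two bounds. First, since the $b_n$ are non-negative integers we have $b_n^2 \geq b_n$ for every $n$, hence
\[
\int_{\bS_r} |g(z)|^2 \, d\mu \;=\; \sum_{n \geq 0} b_n^2 \, r^{2n} \;\geq\; \sum_{n \geq 0} b_n \, r^{2n} \;=\; g(r^2).
\]
Second, I would pass from the $L^2$ norm to the $L^k$ norm by Jensen's inequality applied to the convex function $x \mapsto x^{k/2}$ on $[0,\infty)$ (convex because $k \geq 2$), with respect to the probability measure $\mu$:
\[
\int_{\bS_r} |g(z)|^k \, d\mu \;=\; \int_{\bS_r} \bigl(|g(z)|^2\bigr)^{k/2} \, d\mu \;\geq\; \left( \int_{\bS_r} |g(z)|^2 \, d\mu \right)^{k/2}.
\]
Combining the two displays gives $\int_{\bS_r} |g(z)|^k d\mu \geq g(r^2)^{k/2}$, as required. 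Equivalently, one could invoke Hölder's inequality directly (which is the formulation mentioned in the preamble), noting that $\|f\|_{L^2(\mu)} \leq \|f\|_{L^k(\mu)}$ on a probability space whenever $k \geq 2$.

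There is no real obstacle here; the only things to be careful about are (i) the use of the probability normalisation $\int_{\bS_r} d\mu = 1$, which is what makes the $L^p$ norms monotone in $p$ and allows Jensen's inequality to be applied cleanly, and (ii) the integrality assumption, which is used in exactly one place — the step $b_n^2 \geq b_n$ — and would fail for general non-negative real coefficients.
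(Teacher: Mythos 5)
Your proof is correct and follows essentially the same route as the paper: Parseval for the first identity, then the chain $\int_{\bS_r}|g|^2\,d\mu = \sum b_n^2 r^{2n} \geq \sum b_n r^{2n} = g(r^2)$ (using integrality via $b_n^2 \geq b_n$), followed by the monotonicity of $L^p$ norms with respect to a probability measure. The only cosmetic difference is that the paper deduces $\|g\|_{L^2(\mu)}^2 \leq \|g\|_{L^k(\mu)}^2$ directly from H\"older's inequality with exponents $k/2$ and $k/(k-2)$ applied to $|g|^2 \cdot 1$, whereas you reach the same conclusion via Jensen's inequality for the convex function $x \mapsto x^{k/2}$; these are equivalent statements and you even note the H\"older alternative yourself.
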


\begin{proof}
	For $k=2$ Parseval's identity gives us
	\begin{align*}
		\int_{\bS_r} |g(z)|^2 d\mu = \int_{\bS_r} g(z) \overline{g(z)} d\mu = \sum_{n,m\geq 0} b_n \overline{b_m} \int_{\bS_r} z^n \overline{z^m} d\mu = \sum_{n = 0}^{\infty} |b_n|^2 \, r^{2n} \geq \sum_{n = 0}^{\infty} b_n \big( r^2 \big)^n = g \big( r^2 \big).
	\end{align*}
	When $k > 2$, then H\"older's inequality and the observation for $k = 2$ establish that 
	\begin{equation*}
		\left( \int_{\bS_r} |g(z)|^k d\mu \right)^{2/k} \left( \int_{\bS_r} d\mu \right)^{(k-2)/k} \geq \int_{\bS_r} |g(z)|^2 d\mu \geq g \big( r^2 \big).
	\end{equation*}
	Noting that $\int_{\bS_r} d\mu = 1$ and raising the previous inequality to the $k/2$--th power gives us the result.
\end{proof}
We are now ready to prove the main statement of this section in three steps. We start with the following lemma.

\begin{lemma} \label{lem:lemita}
	For any $M,k,m \in \NN$ satisfying $m\leq k$ and any $\mA \subseteq \NN_0$ we have
	\begin{equation*}
	    \int_{\bS_r} |f_{\mA}(z)^k \, h_M(z)^2| d\mu \geq \Big( \int_{\bS_r} |f_{\mA}(z)^m \, h_M(z)^2| d\mu  \Big) \Big( \int_{\bS_r} |f_{\mA}(z)^2| d\mu  \Big)^{(k-m)/2}.
	\end{equation*}
\end{lemma}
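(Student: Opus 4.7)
The plan is to reduce the statement to a single base-case inequality and then bootstrap via log-convexity of $t \mapsto \log L_t$. For brevity, write
\begin{equation*}
    L_t = \int_{\bS_r} |f_{\mA}(z)|^t \, |h_M(z)|^2 \, d\mu, \qquad I = \int_{\bS_r} |f_{\mA}(z)|^2 \, d\mu, \qquad C_M = \int_{\bS_r} |h_M(z)|^2 \, d\mu,
\end{equation*}
so that the goal becomes $L_k \geq L_m \cdot I^{(k-m)/2}$.

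First I would establish the base inequality $L_2 \geq L_1 \sqrt{I}$. On the one hand, Cauchy--Schwarz applied to the pair $|f_{\mA}| \, |h_M|$ and $|h_M|$ gives $L_1^2 \leq L_2 \, C_M$. On the other hand, since $f_{\mA} \, h_M$ is analytic in $\bD$ with non-negative integer coefficients, applying \cref{lemma:parseval_consequence} to $g = f_{\mA} h_M$ yields $L_2 = \int_{\bS_r} |f_{\mA} h_M|^2 \, d\mu \geq (f_{\mA} h_M)(r^2) = f_{\mA}(r^2) \, h_M(r^2) = I \cdot C_M$, where the last equality uses \cref{lemma:parseval_consequence} with equality for each of $f_{\mA}$ and $h_M$ (both have $\{0,1\}$-valued coefficients). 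Combining the two bounds yields $(L_2/L_1)^2 \geq L_2 / C_M \geq I$, as desired.

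Second, I would observe that $t \mapsto \log L_t$ is convex on $[0,\infty)$. For $t_1, t_2 \geq 0$ and $\alpha \in [0,1]$, the decomposition
\begin{equation*}
    |f_{\mA}|^{(1-\alpha) t_1 + \alpha t_2} \, |h_M|^2 = \left( |f_{\mA}|^{t_1} |h_M|^2 \right)^{1-\alpha} \left( |f_{\mA}|^{t_2} |h_M|^2 \right)^{\alpha}
\end{equation*}
combined with H\"older's inequality with exponents $1/(1-\alpha)$ and $1/\alpha$ gives $L_{(1-\alpha)t_1 + \alpha t_2} \leq L_{t_1}^{1-\alpha} L_{t_2}^{\alpha}$. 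Log-convexity implies that the discrete ratio $L_{t+1}/L_t$ is non-decreasing in $t$, so the base case of Step~1 gives $L_{t+1}/L_t \geq L_2/L_1 \geq \sqrt{I}$ for every integer $t \geq 1$. Telescoping then finishes the proof:
\begin{equation*}
    \frac{L_k}{L_m} = \prod_{t=m}^{k-1} \frac{L_{t+1}}{L_t} \geq \left( \sqrt{I} \right)^{k-m} = I^{(k-m)/2}.
\end{equation*}

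The main obstacle is the base case $L_2 \geq L_1 \sqrt{I}$: one must combine an \emph{upper} bound on $L_1^2$ coming from Cauchy--Schwarz with a \emph{lower} bound on $L_2$ coming from \cref{lemma:parseval_consequence} in order to extract a lower bound on the ratio $L_2/L_1$, which is the non-obvious maneuver. Once this base case is settled, the log-convexity and telescoping arguments in Steps~2 and~3 are standard.
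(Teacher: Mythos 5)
Your proof is correct, and it takes a genuinely different route from the paper's. The paper argues at the level of Taylor coefficients: writing $a_n^{M,m}=[z^n]f_{\mA}(z)^{m/2}h_M(z)$, it first proves the special case $k=m+2$ with $k$ even via the pointwise convolution bound $|a_n^{M,m+2}|^2\geq\sum_{i+j=n}|a_i^{M,m}|^2|\delta_j|^2$ together with Parseval, then handles the two parity cases of $k=m+1$ by Cauchy--Schwarz, and finally closes via an induction on $k$. Your argument replaces that three-case, parity-driven analysis with a single clean base case $L_2\geq L_1\sqrt{I}$ (extracted by pitting Cauchy--Schwarz against the Parseval lower bound $L_2\geq f_{\mA}(r^2)h_M(r^2)=I\,C_M$), and then invokes H\"older-based log-convexity of $t\mapsto L_t$ to upgrade that single ratio bound to all consecutive ratios and telescope. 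This is more conceptual and makes the role of log-convexity explicit where the paper achieves it implicitly through its Cauchy--Schwarz steps; the paper's approach, in exchange, works entirely with elementary inequalities between non-negative coefficients and avoids invoking a general interpolation-type fact. Both yield the same conclusion, and both rely on the special structure of $f_{\mA}$ and $h_M$ having $\{0,1\}$-coefficients (the paper in the convolution bound, you in making the Parseval inequality an equality for $f_{\mA}$ and $h_M$ individually).

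One small housekeeping remark: your telescoping from $t=m$ to $k-1$ uses $L_{t+1}/L_t\geq L_2/L_1$ for $t\geq 1$, which is fine since $m\in\NN$ so $m\geq 1$; and the division by $L_1$ in the base case tacitly assumes $\mA\neq\emptyset$, but the lemma is trivial otherwise, so this causes no harm.
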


\begin{proof}
	We start by considering three different cases concerning $k$ and $m$.

	\medskip
	\noindent\textbf{Case 1.} If $k=m+2$ and $k$ is even (and therefore $m$ is even), then let
		\begin{equation*}
			a_n^{M, m} = [z^n]f_{\mA}(z)^{m/2} \, h_M(z),
		\end{equation*}
		that is $f_{\mA}(z)^{m/2} \, h_M(z) = \sum_{n=0}^{\infty} a_n^{M, m} z^n$. Note that $f_{\mA}(z)^{k/2} \, h_M(z) = \big( f_{\mA}(z)^{m/2} \, h_M(z) \big) \, f_{\mA}(z)$ and therefore
		\begin{equation*}
			a_n^{M, k} = [z^n]\big( f_{\mA}(z)^{m/2} \, h_M(z) \big) \, f_{\mA}(z) = \sum_{i+j = n}[z^i] f_{\mA}(z)^{m/2} \, h_M(z) \, [z^j] f_{\mA}(z) =  \sum_{i+j=n} a_i^{M, m} \delta_j
		\end{equation*}
		for any $n \geq 0$. Here $\delta_j$ is the indicator function of $\mA$, that is $\delta_j = 1$ if $j \in \mA$ and $0$ otherwise. It follows that
	    \begin{equation*}
	    	\big| a_n^{M, k} \big|^2 \geq \sum_{i+j=n} \big|a_i^{M, m}\big|^2 \, |\delta_j|^2.
	    \end{equation*}
		Using this inequality as well as \cref{eq:parseval_consequence_1} in \cref{lemma:parseval_consequence}, we conclude that
		\begin{align*}
			\int_{\bS_r} |f_{\mA}(z)^k \, h_M(z)^2|d\mu  &= \sum_{n = 0}^{\infty} |a_n^{M, k}|^2 \, r^{2n}\geq \sum_{n = 0}^{\infty} \sum_{i+j=n} |a_i^{M, m}|^2 \, |\delta_j|^2 \, r^{2i+2j} \\
			& = \bigg( \sum_{i = 0}^{\infty} |a_i^{M, m}|^2 \, r^{2i}\bigg) \, \bigg( \sum_{j = 0}^{\infty} |\delta_j|^2 \, r^{2j}\bigg) \\
			&=  \int_{\bS_r} |f_{\mA}(z)^m \, h_M(z)^2|d\mu  \, \int_{\bS_r} |f_{\mA}(z)^{2}|d\mu  .
		\end{align*}

	\medskip
	\noindent\textbf{Case 2.} If $k = m+1$ and $k$ is even, then applying Cauchy--Schwarz and Case~1, we get that
		\begin{align*}
		    \int_{\bS_r} |f_{\mA}(z)^m \, h_M(z)^2|d\mu  &\leq \Big( \int_{\bS_r} |f_{\mA}(z)^{m+1} h_M(z)^2|d\mu  \Big)^{1/2} \Big( \int_{\bS_r} |f_{\mA}(z)^{m-1} h_M(z)^2|d\mu  \Big)^{1/2} \\
		    &\leq \int_{\bS_r} |f_{\mA}(z)^k \, h_M(z)^2|d\mu \, \Big( \int_{\bS_r} |f_{\mA}(z)^2|d\mu  \Big)^{-1/2}.
		\end{align*}
		Passing the last integral to the left-hand side establishes the statement in this case.
	
	\medskip
	\noindent\textbf{Case 3.} If $k=m+1$ and $k$ is odd, then applying Cauchy--Schwarz and Case~2, we get that
		\begin{align*}
		    \int_{\bS_r} |f_{\mA}(z)^m \, h_M(z)^2|d\mu &\leq \Big( \int_{\bS_r} |f_{\mA}(z)^{m+1} h_M(z)^2|d\mu  \Big)^{1/2} \Big( \int_{\bS_r} |f_{\mA}(z)^{m-1} h_M(z)^2|d\mu  \Big)^{1/2}\\
		    &   \leq   \Big( \int_{\bS_r} |f_{\mA}(z)^{k} h_M(z)^2|d\mu  \Big)^{1/2} \Big( \int_{\bS_r} |f_{\mA}(z)^m \, h_M(z)^2|d\mu  \Big)^{1/2} \Big( \int_{\bS_r} |f_{\mA}(z)|^2 d\mu \Big)^{-1/4}.
		\end{align*}
		Passing the last two integrals to the left-hand side and squaring establishes the statement in this case.

	\medskip

	Having established these three cases, the statement of the lemma now follows through an induction on $k$. Clearly the statement holds for $m = k = 1$. Assume now that it holds for $k-1$ and let us show that it then must also hold for $k$. The statement trivially holds for $m = k$ and Cases~2 and~3 establish that it also holds for $m = k-1$. For any $1 \leq m < k-1$ we can simply use the inductive assumption, since
	\begin{align*}
	    \int_{\bS_r} |f_{\mA}(z)^k \, h_M(z)^2| d\mu & \geq \Big( \int_{\bS_r} |f_{\mA}(z)^{k-1} \, h_M(z)^2| d\mu  \Big) \Big( \int_{\bS_r} |f_{\mA}(z)^2| d\mu  \Big)^{1/2} \\
	    & \geq \Big( \int_{\bS_r} |f_{\mA}(z)^{m} \, h_M(z)^2| d\mu  \Big) \Big( \int_{\bS_r} |f_{\mA}(z)^2| d\mu  \Big)^{(k-1-m)/2} \Big( \int_{\bS_r} |f_{\mA}(z)^2| d\mu  \Big)^{1/2} \\
	    & = \Big( \int_{\bS_r} |f_{\mA}(z)^{m} \, h_M(z)^2| d\mu  \Big) \Big( \int_{\bS_r} |f_{\mA}(z)^2| d\mu  \Big)^{(k-m)/2}.
	\end{align*}
	This proves the desired result.
\end{proof}

The following proposition is a slight generalisation of the previous lemma, allowing us to consider exponents in the arguments.
\begin{proposition} \label{prop:em_prop}
	For any $M,k,m,i \in \NN$ satisfying $i \mid M$ and $m   \leq   k$, and for any $\mA \subseteq \NN_0$, we have
	\begin{equation*} \label{eq:em_prop}
	    \int_{\bS_r} |f_{\mA}(z^i)^m \, h_M(z)^2|d\mu   \leq   \Big( \int_{\bS_r} |f_{\mA}(z)^k \, h_M(z)^2|d\mu \Big) \, i^2 \Big( \int_{\bS_r}  |f_{\mA}(z)|^2 d\mu \Big)^{-(k-m)/2}.
	\end{equation*}
\end{proposition}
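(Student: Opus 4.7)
The plan is to combine three ingredients: the factorization $h_M(z)=h_i(z)\,h_{M/i}(z^i)$, valid because $i\mid M$ (both sides expand to $\sum_{n=0}^{M-1}z^n$); the pointwise bound $|h_i(z)|\leq i$ on the closed unit disc; and \cref{lem:lemita}.

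First, using $|h_i|^2\leq i^2$ together with the factorization,
\[
\int_{\bS_r}|f_{\mA}(z^i)^m h_M(z)^2|\,d\mu \leq i^2\int_{\bS_r}|f_{\mA}(z^i)^m h_{M/i}(z^i)^2|\,d\mu = i^2\int_{\bS_{r^i}}|f_{\mA}(w)^m h_{M/i}(w)^2|\,d\mu(w),
\]
where the last equality is the change of variables $w=z^i$, valid because the integrand depends only on $z^i$ and the map $\bS_r\to\bS_{r^i}$ is $i$-to-$1$ measure-preserving on such functions.

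Second, I would establish the comparison
\begin{equation*}
\int_{\bS_{r^i}}|f_{\mA}(w)^m h_{M/i}(w)^2|\,d\mu(w)\leq\int_{\bS_r}|f_{\mA}(z)^m h_M(z)^2|\,d\mu.
\end{equation*}
For $m=2p$ even, write $f_{\mA}^p\,h_{M/i}=\sum_n c_n z^n$ and $f_{\mA}^p\,h_M=\sum_n d_n z^n$. Factoring $h_M=h_i\,h_{M/i}(z^i)$ gives the coefficient identity $d_n=\sum_{j=0}^{i-1}c_{n-jM/i}$, so $d_n\geq c_n\geq 0$, and Parseval together with the inequality $r^{2in}\leq r^{2n}$ (which holds since $r\in(1/2,1)$ and $i\geq 1$) gives $\sum c_n^2 r^{2in}\leq\sum d_n^2 r^{2n}$. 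For $m$ odd, I would reduce to the even case by Cauchy--Schwarz between the indices $m-1$ and $m+1$, following Cases~2 and~3 of the proof of \cref{lem:lemita}.

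Finally, \cref{lem:lemita} applied on $\bS_r$ with the same $k,m$ bounds $\int_{\bS_r}|f_{\mA}^m h_M^2|\,d\mu$ by $\bigl(\int_{\bS_r}|f_{\mA}^k h_M^2|\bigr)\bigl(\int_{\bS_r}|f_{\mA}|^2\bigr)^{-(k-m)/2}$, and chaining the three inequalities proves the proposition. The main subtlety I expect is in the odd-$m$ case of the second step, since a na\"ive Cauchy--Schwarz between $m-1$ and $m+1$ yields a geometric mean that, by reverse Cauchy--Schwarz on $\bS_r$, dominates rather than equals $\int_{\bS_r}|f_{\mA}^m h_M^2|\,d\mu$. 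The cleanest workaround is to defer the Cauchy--Schwarz interpolation until after invoking \cref{lem:lemita} on the even neighbours $m\pm 1$: the two resulting half-exponents of $(\int|f_{\mA}|^2)^{-1/2}$ combine to give precisely $-(k-m)/2$ when $m+1\leq k$, and the endpoint case $m=k$ with $k$ odd requires an additional Cauchy--Schwarz step modelled on Case~3 of \cref{lem:lemita}.
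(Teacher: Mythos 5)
Your argument is, at its core, the paper's own proof: factor the smoothing function using $i\mid M$, discard $|h_i|\le i$, compare the coefficients of $f_{\mA}^{m/2}h_{M/i}$ with those of $f_{\mA}^{m/2}h_M$ via Parseval and $r^{2in}\le r^{2n}$, and finish with \cref{lem:lemita}; your change of variables $w=z^i$ is just a rephrasing of the paper's direct application of Parseval to $f_{\mA}(z^i)^{m/2}h_{M/i}(z^i)$. For even $m$ the two proofs coincide step for step. For odd $m$ you are in fact more careful than the paper, which writes $f_{\mA}(z)^{m/2}$ as a power series with non-negative coefficients without comment — legitimate only for even $m$. Your deferred interpolation (apply \cref{lem:lemita} to the even neighbours $m\pm1$ first, then take the geometric mean coming from Cauchy--Schwarz) correctly closes every odd $m<k$, since then $m+1\le k$; the only wrinkle is $m=1$, where you need the $m=0$ instance of \cref{lem:lemita}, which holds by the same Case~1 computation.

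The endpoint $m=k$ with $k$ odd and $i>1$, however, is genuinely not closed by your sketch: Cauchy--Schwarz pushes you up to exponent $k+1$, and the bound you would then need, namely $\int_{\bS_r}|f_{\mA}(z)^{k+1}h_M(z)^2|d\mu\le\big(\int_{\bS_r}|f_{\mA}(z)^{k}h_M(z)^2|d\mu\big)\big(\int_{\bS_r}|f_{\mA}(z)|^2d\mu\big)^{1/2}$, is exactly the reverse of what \cref{lem:lemita} provides; Case~3 of that lemma yields lower bounds on the integral with the larger exponent, not upper bounds, so the ``additional Cauchy--Schwarz step'' you allude to does not exist in that form. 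This loose end is harmless in context — \cref{corollary:of} and everything downstream only invoke the proposition with $m<k$ — and the paper's own proof is no more rigorous at that endpoint. One last nit: the coefficient identity $d_n=\sum_{j=0}^{i-1}c_{n-jM/i}$ follows from the factorisation $h_M(z)=h_{M/i}(z)\,h_i(z^{M/i})$, not from $h_M(z)=h_i(z)\,h_{M/i}(z^i)$ as you cite.
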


\begin{proof}
	Let 
	\begin{equation*}
		a_n^{M, m} = [z^n]f_{\mA}(z)^{m/2}h_M(z),
	\end{equation*}
	that is $f_{\mA}(z)^{m/2}h_M(z) = \sum_{n=0}^{\infty} a_n^{M, m} z^n$. Since $M$ is a multiple of $i$, we can set $N = M/i$ and note that
	\begin{equation*}
		h_M(z) = h_N(z) \, h_i(z^N) = h_N(z) (1+z^N + \ldots + z^{(i-1)N}),
	\end{equation*}
	so that $f_{\mA}(z)^{m/2} \, h_M(z) = f_{\mA}(z)^{m/2} h_N(z)  \, (1 + z^N + \ldots + z^{(i-1)N} )$. In terms of coefficients, this implies that 
	\begin{equation*} \label{eq:coefficient_bound}
	    a_n^{M, m} = a_n^{N, m} + a_{n-N}^{N, m} + a_{n-2N}^{N, m} + \ldots +a_{n-(i-1)N}^{N, m} \geq a_n^{N, m},
	\end{equation*}
	where we let $a_n^{N, m} = 0$ for $n < 0$ and use the fact that all the coefficients are non-negative as both the coefficients of $f$ and $h_N$ are non-negative.
	
	Using that $h_M(z) = h_N(z) \, h_i(z^N)$, $|h_i(z)|   \leq   i$ as well as \cref{eq:parseval_consequence_1} in \cref{lemma:parseval_consequence} and \cref{eq:coefficient_bound}, we get
	\begin{align*}
		\int_{\bS_r} |f_{\mA}(z^i)^m \, h_M(z)^2|d\mu & = \int_{\bS_r} |f_{\mA}(z^i)^m h_N(z^i)^2| \, |h_i(z)^2|d\mu   \leq   i^2 \sum_{n = 0}^{\infty} |a_n^{N, m}|^2 \, r^{2in} \\
		&\leq i^2 \sum_{n = 0}^{\infty} |a_n^{M, m}|^2 \, r^{2n} = i^2 \int_{\bS_r} |f_{\mA}(z)^m \, h_M(z)^2| d\mu.
	\end{align*}
	We conclude the statement of the proposition by applying \cref{lem:lemita} to this inequality.
\end{proof}

The following corollary to the previous proposition establishes the main point of this section and is written in ready-to-use form for the proof of \cref{thm:erdos_fuchs}.
\begin{corollary} \label{corollary:of}
	For any $M,k,m,i \in \NN$ satisfying $i \mid M$ and $m < k$, and for any infinite set $\mA \subseteq \NN_0$, we have
    \begin{equation*}
    	 \int_{\bS_r} |f_{\mA}(z^i)^m \, h_M(z)^2|d\mu = o_r \! \left( \int_{\bS_r} |f_{\mA}(z)^k \, h_M(z)^2|d\mu \right).
    \end{equation*}
\end{corollary}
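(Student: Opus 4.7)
The plan is to combine \cref{prop:em_prop} with \cref{lemma:parseval_consequence} and the infiniteness of $\mA$. Specifically, \cref{prop:em_prop} already provides the key inequality
\begin{equation*}
\int_{\bS_r} |f_{\mA}(z^i)^m \, h_M(z)^2|\, d\mu \leq i^2 \Big( \int_{\bS_r} |f_{\mA}(z)^k \, h_M(z)^2|\, d\mu \Big) \Big( \int_{\bS_r} |f_{\mA}(z)|^2\, d\mu \Big)^{-(k-m)/2},
\end{equation*}
so dividing through by the integral on the right of the claimed $o_r$-statement, it suffices to show that
\begin{equation*}
i^2 \Big( \int_{\bS_r} |f_{\mA}(z)|^2\, d\mu \Big)^{-(k-m)/2} \xrightarrow[r \to 1]{} 0.
\end{equation*}
Since $i$ is a fixed positive integer and $k - m \geq 1$ because $m < k$, this reduces to showing that $\int_{\bS_r} |f_{\mA}(z)|^2\, d\mu \to \infty$ as $r \to 1^-$.

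For the final step I would invoke \cref{eq:parseval_consequence_1} from \cref{lemma:parseval_consequence}, which gives
\begin{equation*}
\int_{\bS_r} |f_{\mA}(z)|^2\, d\mu = \sum_{a \in \mA} r^{2a}.
\end{equation*}
Since $\mA$ is infinite, for any $T > 0$ one can pick a finite subset $\mA_T \subseteq \mA$ of size at least $2T$ and then choose $r$ close enough to $1$ so that $\sum_{a \in \mA_T} r^{2a} \geq T$; thus the partial sum, and hence the whole series, diverges as $r \to 1^-$. Plugging this back into the chain of inequalities above yields the $o_r$-statement.

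There is no real obstacle here: the hard analytic work was already packaged into \cref{prop:em_prop}, and the corollary is essentially a matter of observing that the only $r$-dependent factor in the bound is a negative power of a quantity that blows up when $\mA$ is infinite. The only thing to be a bit careful about is keeping $M$ and $i$ fixed while $r \to 1$, which is consistent with the convention for $o_r$ stated in \cref{sec:preliminaries}.
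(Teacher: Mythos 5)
Your proposal is correct and follows exactly the paper's own argument: apply \cref{prop:em_prop}, then use \cref{eq:parseval_consequence_1} from \cref{lemma:parseval_consequence} to identify $\int_{\bS_r}|f_{\mA}(z)|^2\,d\mu = \sum_{a\in\mA} r^{2a}$, which diverges as $r\to 1$ since $\mA$ is infinite, so the extra factor $i^2\big(\int_{\bS_r}|f_{\mA}(z)|^2\,d\mu\big)^{-(k-m)/2}$ tends to $0$.
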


\begin{proof}
	By \cref{eq:parseval_consequence_1} in \cref{lemma:parseval_consequence} we have $\int_{\bS_r}|f_{\mA}(z)|^2 d\mu = \sum_{a\in \mA} r^{2a}$ which tends to infinity as $r$ tends to $1$ since $\mA$ is infinite. Therefore, the result directly follows from \cref{prop:em_prop}.
\end{proof}

Finally, for the proof of our \cref{thm:square_error} we will need an analogous result without the smoothing function. 

\begin{lemma} \label{lemma:integral_without_h} For any $m, k, i \in \NN$ satisfying $m < k$, and for any infinite set $\mA \subseteq \NN_0$, we have
\begin{equation*}
\int_{\bS_r} |f_{\mA}(z^i)^m |d\mu = o_r \! \left( \int_{\bS_r} |f_{\mA}(z)^k|d\mu \right).
\end{equation*}
\end{lemma}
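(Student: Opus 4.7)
The plan is to adapt the reasoning behind \cref{corollary:of} to the setting without a smoothing factor. Since the hypothesis $i \mid M$ of \cref{prop:em_prop} forces $M \geq i$, one cannot simply invoke that proposition with $M = 1$; instead I will replay the two underlying ingredients directly, and use Cauchy--Schwarz to handle the parity of $m$.

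First, I would apply \cref{lem:lemita} with $M = 1$ (so $h_1 \equiv 1$) and use $\int_{\bS_r} |f_\mA(z)|^2\,d\mu = f_\mA(r^2)$ from \cref{lemma:parseval_consequence}, obtaining for every $1 \leq p \leq k$
\begin{equation*}
\int_{\bS_r} |f_\mA(z)|^k\,d\mu \;\geq\; \int_{\bS_r} |f_\mA(z)|^p\,d\mu \cdot f_\mA(r^2)^{(k-p)/2},
\end{equation*}
with the case $p = 0$ being exactly the second inequality in \cref{lemma:parseval_consequence}. Second, for any even $p \geq 0$ I would establish the coefficient comparison
\begin{equation*}
\int_{\bS_r} |f_\mA(z^i)|^p\,d\mu \;\leq\; \int_{\bS_r} |f_\mA(z)|^p\,d\mu
\end{equation*}
by applying \cref{eq:parseval_consequence_1} to $g(z) = f_\mA(z)^{p/2}$: both sides equal $\sum_n a_n^2 s^{2n}$ with non-negative coefficients $a_n = [z^n] f_\mA(z)^{p/2}$, where the base is $s = r^i$ on the left and $s = r$ on the right; since $r^{2i} \leq r^2$, the inequality follows.

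Combining, if $m$ is even these two ingredients immediately yield
\begin{equation*}
\int_{\bS_r} |f_\mA(z^i)|^m\,d\mu \;\leq\; \int_{\bS_r} |f_\mA(z)|^k\,d\mu \cdot f_\mA(r^2)^{-(k-m)/2}.
\end{equation*}
If $m$ is odd (so both $m - 1$ and $m + 1$ are even and $m + 1 \leq k$ since $m < k$), I would first apply Cauchy--Schwarz on $\bS_r$ to get
\begin{equation*}
\Big(\int_{\bS_r} |f_\mA(z^i)|^m\,d\mu\Big)^2 \;\leq\; \int_{\bS_r} |f_\mA(z^i)|^{m-1}\,d\mu \cdot \int_{\bS_r} |f_\mA(z^i)|^{m+1}\,d\mu,
\end{equation*}
then apply the coefficient comparison to both factors and the first ingredient with $p = m \pm 1$, and multiply the two resulting lower bounds on $\int_{\bS_r} |f_\mA(z)|^k\,d\mu$ to deduce
\begin{equation*}
\Big(\int_{\bS_r} |f_\mA(z^i)|^m\,d\mu\Big)^2 \;\leq\; \Big(\int_{\bS_r} |f_\mA(z)|^k\,d\mu\Big)^2 \cdot f_\mA(r^2)^{-(k-m)}.
\end{equation*}
In either parity the ratio is bounded by $f_\mA(r^2)^{-(k-m)/2}$, which tends to $0$ as $r \to 1$ because $\mA$ is infinite and hence $f_\mA(r^2) = \sum_{a \in \mA} r^{2a} \to \infty$.

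The only real subtlety is the parity of $m$: the even case falls out immediately, while the odd case requires one extra Cauchy--Schwarz step to split the exponent into the two adjacent even values $m \pm 1$. Beyond that, the proof is a direct analogue of \cref{corollary:of}.
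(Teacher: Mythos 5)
Your argument is correct and its skeleton matches the paper's proof: split on the parity of $m$, use the coefficient comparison $\sum_n a_n^2 r^{2ni}\leq\sum_n a_n^2 r^{2n}$ (via \cref{eq:parseval_consequence_1}) for the even case, and bridge the odd case by applying Cauchy--Schwarz to $|f_\mA(z^i)^{(m-1)/2}|\cdot|f_\mA(z^i)^{(m+1)/2}|$. The one substantive deviation is in how you relate $\int_{\bS_r}|f_\mA(z)^m|\,d\mu$ to $\int_{\bS_r}|f_\mA(z)^k|\,d\mu$: you invoke \cref{lem:lemita} with $M=1$ to get the quotient bound $\int_{\bS_r}|f_\mA(z)^m|\,d\mu \leq \int_{\bS_r}|f_\mA(z)^k|\,d\mu\cdot f_\mA(r^2)^{-(k-m)/2}$ and then let $f_\mA(r^2)\to\infty$, whereas the paper applies H\"older directly to obtain $\int_{\bS_r}|f_\mA(z)^m|\,d\mu\leq\big(\int_{\bS_r}|f_\mA(z)^k|\,d\mu\big)^{m/k}$ and separately notes $\int_{\bS_r}|f_\mA(z)^k|\,d\mu\geq f_\mA(r^2)^{k/2}=\omega_r(1)$. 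The two are effectively interchangeable here (your bound in fact implies the paper's upon substituting $\int_{\bS_r}|f_\mA(z)^k|\,d\mu\geq f_\mA(r^2)^{k/2}$), and either delivers the needed $o_r$. Your route has the aesthetic advantage of closely mirroring the proof of \cref{corollary:of}, while the paper's is slightly more self-contained since it bypasses \cref{lem:lemita} entirely.
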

\begin{proof}
	First assume that $m$ is even. Let $f_{\mA} (z )^{m/2} = \sum_{n=0}^\infty b_n z^{n}$ and note that $b_n \geq 0$ for all $n \in \NN_0$. Using \cref{eq:parseval_consequence_1} in \cref{lemma:parseval_consequence} and H\"older's inequality, we obtain
	\begin{equation*}
		\int_{\bS_r} |f_{\mA}(z^i)^m |d\mu = \sum_{n=0}^\infty  b_n^2  r^{2ni} \leq \sum_{n=0}^\infty b_n^2 r^{2n} = \int_{\bS_r} |f_{\mA}(z)^m |d\mu \leq \left( \int_{\bS_r} |f_{\mA}(z)^k |d\mu \right)^{m/k}.
	\end{equation*}
	Again by H\"older's inequality we know that $\int_{\bS_r}|f_{\mA}(z)|^k d\mu \geq \big( \int_{\bS_r}|f_{\mA}(z)|^2 d\mu \big)^{k/2} = \omega_r(1)$, so that the statement follows for even $m$ since $k < m$.
	
	Now assume that $m$ is odd. Let $f_{\mA }(z)^{(m+1)/2} (z) = \sum_{n=0}^\infty b_n' z^n$ and again note that $b_n' \geq 0$ for all $\NN_0$. Applying \cref{eq:parseval_consequence_1} in \cref{lemma:parseval_consequence}, we get
	\begin{equation} \label{eq:m+1_lastlemma}
		\int_{\bS_r} |f_{\mA}(z^i)^{m+1} |d\mu = \sum_{n=0}^\infty  b_n^{\prime 2}  r^{2ni} \leq \sum_{n=0}^\infty b_n^{\prime 2} r^{2n} = \int_{\bS_r} |f_{\mA}(z)^{m+1} |d\mu = O_r \! \left( \int_{\bS_r} |f_{\mA}(z)^k|d\mu \right).
	\end{equation}
	The last equality is trivial if $m+1=k$ and follows from the even case if $m+1 < k$. Using Cauchy-Schwarz, the even case for $m-1$ as well as \cref{eq:m+1_lastlemma}, we obtain
	\begin{align*}
		\int_{\bS_r} |f_{\mA}(z^i)^m |d\mu &\leq \left( \int_{\bS_r} |f_{\mA}(z^i)^{m-1} |d\mu \right)^{1/2} \left( \int_{\bS_r} |f_{\mA}(z^i)^{m+1} |d\mu \right)^{1/2} \\
		&= o_r \left( \int_{\bS_r} |f_{\mA}(z)^k|d\mu \right)^{1/2} O_r \left( \int_{\bS_r} |f_{\mA}(z)^k|d\mu \right)^{1/2}, 
	\end{align*} 
	proving the statement for $m$ odd.
\end{proof}

\subsection{Some asymptotic bounds} \label{sec:asymptotic_estimates}

Let us finally establish some general asymptotic bounds that will be needed in the proofs of \cref{thm:erdos_fuchs} and \cref{thm:square_error}.

\begin{lemma} \label{lemma:o(fi)}
	Let $i \in \NN$ and suppose that $g(z)=\sum_{n = 0}^{\infty} b_n z^n$ has non-negative coefficients and is analytic in $\bD$. If $\sum_{n=0}^\infty  b_n = \infty$, then
	\begin{equation*}
		g(r^i) = O_r  \big(g(r)^i\big).
	\end{equation*}
\end{lemma}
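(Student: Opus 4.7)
The plan is to combine monotonicity of $g$ on $[0,1)$ with the fact that $g(r) \to \infty$ as $r \to 1^-$; no deeper machinery is needed. Since $g$ has non-negative coefficients, $g$ is non-decreasing on $[0,1)$, and because $r^i \leq r$ for $r \in (0,1)$ and $i \geq 1$, we immediately obtain $g(r^i) \leq g(r)$. Next, since $\sum_{n \geq 0} b_n = \infty$ and the partial sums $\sum_{n=0}^N b_n r^n$ are monotone in both $N$ and $r$, Abel's theorem (or just monotone convergence) gives $\lim_{r \to 1^-} g(r) = +\infty$. Consequently, there exists some $r_0 \in (1/2,1)$ such that $g(r) \geq 1$ for all $r \in [r_0,1)$, and on this range we get
\begin{equation*}
	g(r^i) \leq g(r) \leq g(r)^i,
\end{equation*}
which already yields the bound with constant $1$ near $r = 1$.

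It remains to cover the range $r \in (1/2, r_0]$, which is compact. Here $g(r^i) \leq g(r_0^i) \leq g(r_0)$ is uniformly bounded above. Moreover, since $\sum b_n = \infty$ implies that $g$ is not identically zero, we may pick the smallest $n_0$ with $b_{n_0} > 0$ and note that $g(r) \geq b_{n_0} r^{n_0} \geq b_{n_0} (1/2)^{n_0} > 0$ for all $r \in (1/2,1)$, so $g(r)^i$ is bounded away from zero on $(1/2, r_0]$. Taking the maximum of the two resulting constants produces a single $C > 0$ such that $g(r^i) \leq C \, g(r)^i$ for every $r \in (1/2,1)$, which is exactly $g(r^i) = O_r(g(r)^i)$.

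The main (and very mild) obstacle is just making sure the asymptotic bound is uniform on the full range $(1/2,1)$ rather than only in a left neighbourhood of $1$; as above, this is handled by a compactness argument together with the observation that $g$ does not vanish identically.
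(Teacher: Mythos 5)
Your proof is correct and takes essentially the same approach as the paper: both rest on the monotonicity of $g$ on $[0,1)$ (giving $g(r^i) \leq g(r)$) together with $g(r) \to \infty$ as $r \to 1^-$. The paper's version is a two-line argument that only tracks an arbitrary sequence $r_n \to 1^-$ (and in fact yields the sharper $o_r$-bound for $i \geq 2$), whereas you add a compactness step to obtain a uniform constant over all of $(1/2,1)$; that extra care is harmless but not required under the paper's convention that $O_r$ is an asymptotic as $r \to 1$.
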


\begin{proof}
	Since $g(z)$ has non-negative coefficients, $g(r)$ is monotone increasing in $r$ and tends to infinity as $r$ tends to $1$. Let $(r_n)_{n\geq 1}$ be an arbitrary increasing sequence tending to $1$ in $(1/2,1)$. Since $r_n^i < r_n$, we have $g(r_n^i) < g(r_n) = o\big( g(r_n)^i \big)$ for all $n$ due to the monotonicity of $g$.
\end{proof}

\begin{lemma} \label{lemma:elliptic_integral}
	We have
	\begin{equation*}
		\int_{\bS_r} \frac{d\mu}{|1-z|} = O_r  \big( - \log (1-r) \big).
	\end{equation*}
\end{lemma}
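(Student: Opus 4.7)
The plan is to parametrize the integral and reduce it to a standard elementary estimate. Writing $z = re^{i\theta}$ so that $d\mu = d\theta/(2\pi)$, a direct computation gives
\begin{equation*}
    |1-z|^2 = (1-r)^2 + 4r\sin^2(\theta/2).
\end{equation*}
The integrand is even in $\theta$, so
\begin{equation*}
    \int_{\bS_r} \frac{d\mu}{|1-z|} = \frac{1}{\pi}\int_0^\pi \frac{d\theta}{\sqrt{(1-r)^2 + 4r\sin^2(\theta/2)}}.
\end{equation*}

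Next, I would use two elementary lower bounds for $|1-z|$. First, dropping the second term gives $|1-z| \geq 1-r$. Second, using $\sin(x) \geq 2x/\pi$ for $x \in [0,\pi/2]$ together with $r > 1/2$, we obtain $|1-z| \geq \sqrt{2r}\,|\theta|/\pi \geq |\theta|/\pi$ for $\theta \in [0,\pi]$. The plan is then to split the integration range at $\theta = 1-r$ and apply whichever bound is stronger on each piece:
\begin{equation*}
    \int_0^\pi \frac{d\theta}{\sqrt{(1-r)^2 + 4r\sin^2(\theta/2)}} \leq \int_0^{1-r} \frac{d\theta}{1-r} + \pi\int_{1-r}^\pi \frac{d\theta}{\theta}.
\end{equation*}
The first integral equals $1$, and the second equals $\pi(\log \pi - \log(1-r))$. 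Both are $O_r(-\log(1-r))$ as $r \to 1$, which yields the claim after dividing by $\pi$.

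There is no serious obstacle here: the argument is a standard circle-integral computation, and the only care needed is to verify that the splitting point $\theta = 1-r$ indeed sits inside $[0,\pi]$ (which is automatic for $r \in (1/2,1)$) and that the asymptotic dominant term is indeed $-\log(1-r)$ rather than a constant, which it is because $1-r \to 0^+$. The bound $\sin x \geq 2x/\pi$ is the one slightly non-trivial ingredient, but it is a well-known elementary inequality that follows from concavity of $\sin$ on $[0,\pi/2]$.
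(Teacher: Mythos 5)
Your argument is correct. Let me briefly compare it with the paper's. Both proofs parametrize $z = re^{i\theta}$ and reduce to bounding $|1-z|$ from below near $\theta = 0$, but the decompositions differ. The paper keeps the form $|1-z| = \sqrt{(1-r\cos\theta)^2 + (r\sin\theta)^2}$, restricts attention to $\theta \in [-\pi/2,\pi/2]$ (bounding the rest trivially by a constant), uses $|\sin\theta| \geq |\theta|/2$, and then applies the root-mean-square--arithmetic-mean inequality to get the single lower bound $|1-z| \geq (1-r + r\theta/2)/\sqrt{2}$, producing one closed-form logarithmic integral. You instead use the half-angle identity $|1-z|^2 = (1-r)^2 + 4r\sin^2(\theta/2)$, keep the whole range $[0,\pi]$, and split at $\theta = 1-r$, applying $|1-z|\geq 1-r$ on the inner piece and $|1-z| \geq \theta/\pi$ (via concavity of $\sin$) on the outer piece. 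The two approaches are equally elementary and yield the same order; yours avoids the RMS--AM step at the cost of a two-range split, while the paper's avoids the split at the cost of that one extra inequality. (Minor slip: from $\sqrt{4r\sin^2(\theta/2)}$ and $\sin(\theta/2)\geq \theta/\pi$ you get $|1-z| \geq 2\sqrt{r}\,\theta/\pi$ rather than $\sqrt{2r}\,\theta/\pi$; since $2\sqrt{r} > \sqrt{2r}$ this only makes your stated intermediate bound weaker, and the final inequality $|1-z|\geq \theta/\pi$ for $r>1/2$ is still correct.)
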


\begin{proof}
	Let $z = r e^{i\theta}$. Using the fact that $|\sin \theta | \geq |\theta|/2$ for any  $\theta \in [-\pi/2, \pi/2]$, we can bound the integral as
	\begin{align*}
		\int_{\bS_r} \frac{d\mu}{|1-z|}  &= \frac{1}{2\pi r} \int_{-\pi /2}^{3\pi /2} \frac{d\theta}{\sqrt{(1-r\cos \theta)^2 + (r\sin \theta)^2}} \\
		&   \leq   \frac{1}{2 \pi r} \int_{-\pi /2}^{\pi /2} \frac{d\theta}{\sqrt{(1-r)^2 + (r \sin \theta)^2 }} + \frac{1}{2 \pi r} \int_{\pi/2}^{3\pi/2} \frac{d\theta}{\sqrt{1+r^2}} \\
		&   \leq   \frac{1}{2 \pi r} \int_{-\pi/2}^{\pi/2} \frac{d\theta}{\sqrt{(1-r)^2 + r^2 \theta^2 /4 }} + 1 
		\leq \frac{\sqrt{2}}{ \pi r} \int_{0}^{\pi/2} \frac{d\theta}{\big( 1-r + r\theta / 2 \big)} + 1 \\
		&   \leq   \frac{2 \sqrt{2} }{\pi r^2} \log \left( \frac{\pi/4}{1-r} + 1 \right) + 1   = O_r  \big( - \log (1-r) \big),
	\end{align*}
	where we have used the root-mean square--arithmetic mean inequality.
\end{proof}

\begin{lemma} \label{lemma:sum_with_logs}
	For any sequence of real numbers $e_n$ satisfying $e_n = o_n \big( n^{1/4} \log^{-1/2} n \big)$ we have 
	\begin{equation*} \label{eq:o_sum_logs_result}
		\sum_{n=0}^{\infty}  e_n^2 \, r^{2n} = o_r \! \left(  \frac{-1}{(1-r)^{3/2} \log \left( 1-r \right) } \right).
	\end{equation*}
\end{lemma}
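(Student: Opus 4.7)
The plan is to separate the sum according to two scales and reduce matters to a standard Abelian estimate. Fix $\epsilon > 0$. By the hypothesis $e_n = o_n(n^{1/4}\log^{-1/2} n)$, there exists $N = N(\epsilon)\in\NN$ such that $e_n^2 \leq \epsilon^2\, n^{1/2}/\log n$ for every $n \geq N$. Accordingly I would split
\[
\sum_{n=0}^{\infty} e_n^2 \, r^{2n} = \sum_{n < N} e_n^2 \, r^{2n} + \sum_{n \geq N} e_n^2 \, r^{2n}.
\]
The first term is bounded by a constant depending only on $\epsilon$, whereas the target quantity $-1/((1-r)^{3/2}\log(1-r))$ tends to $\infty$ as $r \to 1$, so this finite piece is automatically $o_r$ of the right-hand side. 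Using the pointwise bound on the tail reduces the problem to proving
\[
T(r) := \sum_{n \geq 2} \frac{n^{1/2}}{\log n}\, r^{2n} = O_r\!\left( \frac{-1}{(1-r)^{3/2}\log(1-r)} \right),
\]
after which letting $\epsilon \to 0$ yields the desired $o_r$ conclusion.

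To establish the bound on $T(r)$, I would split the summation range at the natural scale $n_0 = \lfloor 1/(1-r) \rfloor$, which matches the exponential cutoff produced by $r^{2n}$. For the low range $2 \leq n < n_0$, I use $r^{2n} \leq 1$ and the elementary integral comparison
\[
\sum_{2\leq n < n_0} \frac{n^{1/2}}{\log n} = O\!\left( \frac{n_0^{3/2}}{\log n_0} \right) = O\!\left( \frac{1}{(1-r)^{3/2}(-\log(1-r))} \right),
\]
which is already of the correct order. For the high range $n \geq n_0$, I exploit the monotonicity $\log n \geq \log n_0 \sim -\log(1-r)$ to factor the logarithm out:
\[
\sum_{n \geq n_0} \frac{n^{1/2}}{\log n}\, r^{2n} \leq \frac{1}{\log n_0} \sum_{n \geq 0} n^{1/2}\, r^{2n} = O\!\left( \frac{1}{-\log(1-r)} \cdot \frac{1}{(1-r)^{3/2}} \right),
\]
using the classical Abel asymptotic $\sum_{n \geq 0} n^{1/2} r^{2n} = \Theta((1-r)^{-3/2})$. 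Adding both contributions yields the required $O_r$-bound on $T(r)$.

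The main obstacle is the joint bookkeeping of the two scales $(1-r)^{-3/2}$ (from $n^{1/2} r^{2n}$) and $(-\log(1-r))^{-1}$ (from $1/\log n$): these scales only match if one splits precisely at $n_0 \sim 1/(1-r)$, because a naive bound $1/\log n \leq 1/\log 2$ would discard the logarithmic saving entirely and leave only $O((1-r)^{-3/2})$, which is too weak. With the split placed correctly both halves contribute at the same order, no extraneous factor appears, and the final $o_r$-statement follows by sending $\epsilon \to 0$ in the inequality $\sum_n e_n^2 r^{2n} \leq o_r(1) \cdot (-(1-r)^{-3/2}\log^{-1}(1-r)) + \epsilon^2 \, O_r(-(1-r)^{-3/2}\log^{-1}(1-r))$.
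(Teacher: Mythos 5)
Your proof is correct and structurally parallel to the paper's: isolate the small $n$ with a fixed cutoff $N(\epsilon)$ using the pointwise $o$-bound on $e_n$, then control $\sum_{n\geq 2} n^{1/2}\log^{-1}(n)\,r^{2n}$ by splitting at a radius-dependent scale and using $\log n \geq \log(\text{cutoff})$ on the tail. Where you diverge is the split point: you take $n_0 \sim (1-r)^{-1}$, whereas the paper takes $N_0 = \lfloor(1-r)^{-1/2}\rfloor$. With your choice both halves contribute at the final order, and the low range needs the genuine estimate $\sum_{2\leq n<n_0} n^{1/2}/\log n = O(n_0^{3/2}/\log n_0)$ (correct, but requiring an inner split near $\sqrt{n_0}$ to justify the integral comparison). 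The paper's smaller cutoff makes the low range trivially negligible: there $\log^{-1}n\leq\log^{-1}2$ already gives $O\big((1-r)^{-3/4}\big)$, which is $o$ of the target, so all the weight collapses onto the high range where the same $\log^{-1}(\text{cutoff})\cdot\Theta\big((1-r)^{-3/2}\big)$ bound applies. This also shows that your concluding remark, that the scales ``only match if one splits precisely at $n_0 \sim 1/(1-r)$,'' is not accurate: any cutoff of the form $(1-r)^{-\alpha}$ with $0<\alpha\leq 1$ works, and a lower exponent is in fact slightly cleaner because the naive bound $\log^{-1}n\leq\log^{-1}2$ then already suffices on the low range.
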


\begin{proof}
	We start by showing that that there exists some $C_0  > 0$ such that
	\begin{equation} \label{eq:sum_with_logs}
		\sum_{n = 2}^{\infty} \big(r^{n} \, \log^{-1/2} (n) \, n^{1/4} \big)^2  \leq C_0 \, \frac{-1}{(1-r)^{3/2} \log \left( 1-r \right) }.
	\end{equation}
	Let
	\begin{equation*} \label{eq:N}
		N_0 = N_0(r) = \left\lfloor (1-r)^{-1/2} \right\rfloor = \omega_r(1).
	\end{equation*}
	Since $r < 1$ and $\log^{-1/2} (n)$ is decreasing, we can see that 
	\begin{align} \label{eq:sum_with_logs_first}
		\sum_{n=2}^{N_0} \big( r^{n} \, \log^{-1/2} (n) \, n^{1/4} \big)^2 &\leq \log^{-1} (2) \sum_{n=2}^{N_0} n^{1/2}   \leq   2 \, {N_0}^{3/2} \leq 2 \, (1-r)^{-3/4} \nonumber \\
		&= o_r \! \left( \frac{-1}{(1-r)^{3/2} \log \left( 1-r \right)} \right).
	\end{align}
	In order to deal with the remainder of the sum, we start by noting that for any $\delta > 1$ we have
	\begin{align*}
		(1-x)^{-\delta} = \sum_{n=0}^\infty c_n x^n & = \sum_{n=0}^\infty (-1)^n \frac{-\delta \, (-\delta - 1) \cdots (-\delta - n + 1)}{n!} \, x^n \\
		& = \sum_{n=0}^\infty \frac{\delta \, (\delta + 1) \cdots (\delta + n - 1)}{n!} \, x^n.
	\end{align*}
	The asymptotic behaviour of the logarithm of the coefficients of the Taylor expansion of $(1-x)^{-\delta}$ is therefore given by 
	\begin{align*}
		\log c_n & = \sum_{i=1}^{n} \log \left( \frac{\delta + i - 1}{i} \right) \geq \int_1^{n+1} \log \left( \frac{\delta+x-1}{x} \right) dx
	\end{align*}
	where we have used that that $1+ (\delta- 1)/i$ is decreasing in $i$ since $\delta > 1$. Using that
	\begin{equation*}
		\int \log \left( \frac{\delta-1}{x} + 1 \right) dx = (\delta-1) \log (\delta-1+x) + x \log \left( \frac{\delta-1}{x} + 1 \right)
	\end{equation*}
	and $x \log \big( 1+ (\delta-1)/x  \big) > 0$ for any $x > 0$, it follows that
	\begin{align*}
	\log c_n & \geq (\delta-1) \, \log n - (\delta-1) \log \delta  - \log \delta .
	\end{align*}
	We therefore have for some appropriate constant $C_1 = C_1(\delta)$ such that
	\begin{equation*}
		n^{\delta-1} \leq C_1 c_n
	\end{equation*}
	for any $n \geq 1$. Using this estimate with $\delta = 3/2$, we get 
	\begin{align} 
		\sum_{n = {N_0}+1}^{\infty} \big(r^{n} \, \log^{-1/2} (n) \, n^{1/4} \big)^2 &\leq   \log^{-1} (N_0) \, C_1 \, \sum_{n=1}^{\infty} c_n r^{2n} =  C_1 \, \frac{1}{(1-r^2)^{3/2} \log  (N_0)} \nonumber \\
		&= O_r \! \left( \frac{-1}{(1-r)^{3/2} \log (1-r)} \right).  \label{eq:sum_with_logs_second}
	\end{align}
	Collecting the estimate for $2   \leq   n   \leq   {N_0}$ in \cref{eq:sum_with_logs_first} and the estimate for $n \geq {N_0}+1$ in \cref{eq:sum_with_logs_second} establishes \cref{eq:sum_with_logs}.
		
	Now let $\varepsilon > 0$ and choose $N_1 \geq 2$ large enough such that
	\begin{equation*}
		e_n   \leq   \frac{ \varepsilon ^{1/2}}{ C_0^{1/2}} \, n^{1/4} \log^{-1/2} (n)
	\end{equation*}
	for all $n \geq N_1$. From \cref{eq:sum_with_logs} it follows that
	\begin{equation} \label{eq:o_sum_logs_1}
		\sum_{n = N}^{\infty} e_n^2 \, r^{2n}   \leq    \frac{-\varepsilon}{(1-r)^{3/2} \log (1-r)} .
	\end{equation}
	On the other hand,  there exists $r_0 = r_0(N_1)$ close enough to $1$ so that for any $r \geq r_0$
	\begin{equation} \label{eq:o_sum_logs_2}
		\sum_{n=0}^{N-1} e_n^2 r^{2n} \leq \sum_{n=0}^{N-1} e_n^2   \leq   \frac{-\varepsilon}{(1-r)^{3/2} \log (1-r)}
	\end{equation}
	since the right-hand side tends to $\infty$ as $r$ tends to $1$ while the left-hand side remains constant. Combining \cref{eq:o_sum_logs_1} and \cref{eq:o_sum_logs_2} and letting $\varepsilon$ tend to $0$ as $r$ tends to $1$, the statement of the lemma follows.
\end{proof}

\begin{lemma} \label{lemma:series_thm_2}
	For any $D > 0$ and $k \geq 1$, we have
	\begin{equation*}
		\sum_{s=1}^{\infty} r^{Ds^k} = \Omega_r \left( \left( 1-r \right)^{-1/k} \right).
	\end{equation*}
\end{lemma}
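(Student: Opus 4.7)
My plan is to estimate the tail of the series by comparison with a Gaussian-type integral, but rather than invoking the integral test directly I will just exhibit a linear-in-$N$ lower bound on a suitable initial segment of the sum. The natural change of variable is $t := -\log r$, which satisfies $t = (1-r) + O_r\bigl((1-r)^2\bigr)$, so that $t = \Theta_r(1-r)$ as $r \to 1^-$, and hence it suffices to prove $\sum_{s=1}^\infty e^{-Dt s^k} = \Omega_r\bigl(t^{-1/k}\bigr)$ as $t \to 0^+$.

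The key observation is that the summand $e^{-Dts^k}$ is bounded below by a positive constant as long as $Dts^k \leq 1$, i.e.\ as long as $s \leq (Dt)^{-1/k}$. Concretely, I would set
\begin{equation*}
    N = N(r) = \bigl\lfloor (Dt)^{-1/k} \bigr\rfloor,
\end{equation*}
which satisfies $N = \omega_r(1)$ since $t \to 0^+$. For every $r$ close enough to $1$ we have $N \geq 1$ and $N \geq \tfrac{1}{2}(Dt)^{-1/k}$; for every $s$ with $1 \leq s \leq N$ we then have $s^k \leq N^k \leq (Dt)^{-1}$, whence $Dts^k \leq 1$, and consequently $r^{Ds^k} = e^{-Dts^k} \geq e^{-1}$.

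Summing this lower bound over $s = 1, \ldots, N$ gives
\begin{equation*}
    \sum_{s=1}^{\infty} r^{Ds^k} \;\geq\; \sum_{s=1}^{N} e^{-1} \;=\; \frac{N}{e} \;\geq\; \frac{1}{2e}(Dt)^{-1/k} \;=\; \Omega_r\bigl((1-r)^{-1/k}\bigr),
\end{equation*}
using $t = \Theta_r(1-r)$ in the final step. This is the desired bound, and no genuine obstacle appears: the proof is essentially just the trivial lower Riemann sum $N \cdot \min_{s \leq N} e^{-Dts^k}$ with $N$ chosen so that the minimum is a constant. The only step requiring any care is checking that the substitution $t = -\log r$ is asymptotically equivalent to $1-r$, which is immediate from the Taylor expansion of the logarithm near $1$.
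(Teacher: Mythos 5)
Your proof is correct, and it takes a slightly different route from the paper's. The paper lower-bounds the series by the integral $\int_1^\infty r^{Dx^k}\,dx$ (valid since $x \mapsto r^{Dx^k}$ is decreasing), and then substitutes $y = (D|\log r|)^{1/k} x$ to reduce it to a multiple of the convergent integral $\int_1^\infty e^{-y^k}\,dy$. You instead truncate the sum at $N = \lfloor (Dt)^{-1/k} \rfloor$ (with $t = -\log r$) and observe that each of those $N$ terms is at least $e^{-1}$, giving $N/e = \Omega_r\bigl((1-r)^{-1/k}\bigr)$. The underlying idea is the same in both cases --- the sum is dominated by the roughly $t^{-1/k}$ terms for which $Dts^k = O(1)$ --- but your argument is slightly more elementary since it avoids the integral comparison test and the evaluation of $\int_1^\infty e^{-y^k}\,dy$, at the cost of a somewhat less clean implied constant. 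One small point worth stating explicitly if you polish this: the bound $N \geq \tfrac{1}{2}(Dt)^{-1/k}$ requires $(Dt)^{-1/k} \geq 2$, which holds for $r$ sufficiently close to $1$; you do say ``for every $r$ close enough to $1$'' so this is fine, but since $r$ is only assumed to lie in $(1/2,1)$ throughout the paper it is worth noting that the $\Omega_r$ statement is asymptotic as $r \to 1^-$, so restricting to a neighbourhood of $1$ is legitimate.
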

\begin{proof}
	Since $0 < r < 1$, we note that the function $f(x) = r^{Dx^k}$ is strictly decreasing in $[0, \infty)$. It follows that we may lower bound the series $\sum_{s=1}^{\infty} r^{Ds^k}$ by its corresponding integral, that is
	\begin{align*}
		\sum_{s=1}^{\infty} r^{ Ds^k} &\geq \int_{1}^\infty r^{Dx^k} dx = \int_{1}^\infty e^{-D|\log r| x^k} dx
		= \frac{\int_{1}^\infty e^{-y^k} dy}{D^{1/k}|\log r|^{1/k}}  = \Omega_r \left( \left( 1-r \right)^{-1/k} \right),
	\end{align*}
	where we have used that  $\int_{1}^\infty e^{-y^k} dy < \infty$.
\end{proof}

%%% PROOF OF EF RESULT %%%

\section{Proof of \cref{thm:erdos_fuchs}} \label{sec:P-T1}

Throughout this proof $k \geq 2$, $c > 0$, $\star\in\{\leq,<\}$ and $\mA \subseteq \NN_0$ are fixed. Furthermore,  $z$ will always lie in $\bD$ and $r$ in the open interval $(1/2,1)$. We write
\begin{equation}\label{eq:beg}
	e_n = \sum_{j = 0}^{n} \Big( \rstar_k(\mA,j) - c \Big)
\end{equation}
for $n \geq 0$ and assume that, counter to the statement of  \cref{thm:erdos_fuchs}, we have $e_n = O_n\big(n^{1/4}\log^{-1/2} n\big)$. Multiplying \cref{eq:beg} by $z^n$ and summing over $n \geq 0$ gives us
\begin{equation*}
	\sum_{n = 0}^{\infty} e_n  z^n  +\frac{c}{(1-z)^2} = \sum_{n = 0}^{\infty} \sum_{j = 0}^{n} \rstar_k(\mA,j) \, z^n = \frac{1}{1-z}  \sum_{n = 0}^{\infty} \rstar_k(\mA,n) \, z^n
\end{equation*}
where we have used the fact that $\sum_{n = 0}^{\infty} (n+1) \, z^n = 1/(1-z)^2$. Applying~\cref{eq:rstar_genfun}, it follows that
\begin{equation} \label{eq:main}
	(1-z) \, \sum_{n = 0}^{\infty}  e_n \, z^n +  \frac{c}{1-z} = \sum_{\bi \in S(k)} \varepsilon_{\star}(\bi) \, f_{\mA}(z^{i_1}) \cdots f_{\mA}(z^{i_{m}}).
\end{equation}
Multiplying by the smoothing function $h_M(z)^2$, for some $M = M(r)$ to be determined later, taking absolute values and integrating along $\bS_r$, we obtain
\begin{equation} \label{eq:main_int}
	\int_{\bS_r} \big| (1-z)h_M(z)^2  \sum_{n = 0}^{\infty}  e_n  z^n \big| d\mu + \int_{\bS_r}  \Big| \frac{c \, h_M (z)^2}{1-z} \Big| d\mu  \geq  \int_{\bS_r} \big| h_M(z)^2 \sum_{S(k)} \varepsilon_{\star}(\bi) \, f_{\mA}(z^{i_1}) \cdots f_{\mA}(z^{i_{k}}) \big| d\mu.
\end{equation}

\paragraph{Bounding the left-hand side.} We note that $(1-z) \, h_M(z)^2 = (1-z^M) \, h_M(z)$ as well as $|h_{M}(z)| \leq M$ and $|1-z^M| \leq 2$. Applying Cauchy--Schwarz, we therefore get that {the left-hand side of \cref{eq:main_int} is at most 
\begin{equation*}
	2 \left( \int_{\bS_r} |h_M(z)|^2 d\mu \right)^{1/2} \left( \int_{\bS_r} \Big| \sum_{n = 0}^{\infty}  e_n \, z^n \Big|^2 d\mu \right)^{1/2} + \int_{\bS_r} \frac{cM^2}{|1-z|}d\mu .
\end{equation*}
Applying \cref{eq:parseval_consequence_1} in \cref{lemma:parseval_consequence} and then \cref{lemma:sum_with_logs} to the first term as well as \cref{lemma:elliptic_integral} to the second term, we can further bound the left-hand side of \cref{eq:main_int} by 
\begin{align} 
	& \quad  2 \left(  \sum_{n=0}^{M-1} r^{2n} \right)^{1/2} \left( \sum_{n = 0}^{\infty} |e_n|^2 \, r^{2n} \right)^{1/2} + M^2 \, O_r  \big( -\log  (1-r) \big) \nonumber \\
	 & \leq   M^{1/2} \: o_r \! \left( \frac{-1}{(1-r)^{3/4} \log^{1/2} (1-r) } \right) + M^2 \, O_r  \big( -\log  (1-r) \big).\label{eq:lhs_upperbound}
\end{align}

\paragraph{Bounding the right-hand side.}  Let us now observe that the right-hand side of~\cref{eq:main_int} is at least 
\begin{equation*}
	 \frac{1}{k!} \int_{\bS_r}  \big|  h_M(z)^2 f_{\mA}^k(z) \big| d\mu -  \sum_{\bi \in S_0(k)} \int_{\bS_r}  \big|  h_M(z)^2 \varepsilon_{\star}(\bi) \, f_{\mA}(z^{i_1}) \cdots f_{\mA}(z^{i_{m}}) \big| d\mu.
\end{equation*}
If $M$ is a multiple of $\textrm{lcm} \{ 1, 2, \ldots, k \}$, then we can use Cauchy--Schwarz and \cref{corollary:of} to upper bound the individual summands of the second term (where $m < k$) by
\begin{align*}
	|\varepsilon_{\star}(\bi)| \left( \int_{\bS_r} h_M (z)^2 f_{\mA} (z^{i_1})^m d\mu \right)^{1/m} \!\!\!\!\!\! \cdots \left( \int_{\bS_r} h_M (z)^2 f_{\mA} (z^{i_m})^m d\mu \right)^{1/m} = o_r \! \left( \int_{\bS_r} \big| h_M(z)^2 f_{\mA} (z)^k \big| d\mu\right)
\end{align*}
as $r$ tends to $1$. It follows that the term with $m = k$ is the dominant one and the terms coming from $S_0(k)$ are negligible. Using \cref{eq:parseval_consequence_2} in \cref{lemma:parseval_consequence}, we therefore know that the right-hand side of~\cref{eq:main_int} is at least 
\begin{align*}
	\left( \frac{1}{k!} + o_r(1)\right) \int_{\bS_r} \big| h_M(z)^2 f_{\mA}(z)^k \big| d\mu \geq \left( \frac{1}{k!} + o_r(1)\right) \, f_{\mA} \big( r^2 \big)^{k/2} h_M \big( r^2 \big).
\end{align*}
In order to estimate $f_{\mA}(r^2)^{k/2}$, we first note that by \cref{lemma:o(fi)} we have
\begin{equation*} 
	\sum_{\bi \in S(k)} \varepsilon_{\star}(\bi)f_{\mA}(r^{2i_1}) \cdots f_{\mA}(r^{2i_{m}}) = f_{\mA}(r^2)^k +  \sum_{\bi \in S_0(k)} O_r  \big( f_{\mA}(r^2)^m \big)  = (1 + o(1)) \, f_{\mA}(r^2)^k.
\end{equation*}
We secondly note, using \cref{lemma:sum_with_logs}, that
\begin{equation*}
 \sum_{n=0}^\infty e_n r^{2n} \leq \sum_{n=0}^\infty (1+e_n^2) r^{2n} \leq \frac{1}{1-r^2} + o_r \left( \frac{-1}{(1-r)^{3/2} \log (1-r) } \right).
\end{equation*}
Substituting $z=r^2$ in \cref{eq:main}, noting that $(1-r^2) = (2 + o_r(1)) \, (1-r)$ and using the two previous equations, it follows that 
\begin{equation*} 
	f_{\mA}(r^2)^k = (k! +o_r(1)) \left( \frac{c}{2(1-r)} + o_r \! \left( \frac{-1}{(1-r)^{1/2} \log (1-r)} \right) \right) = (k! + o_r(1)) \,\frac{c}{2(1-r)}.
\end{equation*}
Let us now choose
\begin{equation} \label{eq:M}
	M = k! \left\lceil \varepsilon \, \frac{-\log^{-1}(1-r)}{(1-r)^{1/2}} \right\rceil
\end{equation}
for some fixed $\varepsilon > 0$. This choice satisfies both $M(r) = \omega_r(1)$ and $r^{M(r)} = \Omega_r(1)$. Note also that $\text{lcm} \{ 1, 2, \ldots k \}$ divides $M$ as previously required. It follows that $h_M(r^2) \geq M/C_2$ for some $C_2 > 1$ and therefore the right-hand side of \cref{eq:main_int} is at least
\begin{equation} \label{eq:rhs_lowerbound}
	\left(  \frac{M\sqrt{c}}{\sqrt{2k!} C_2} + o_r(1) \right) \, (1-r)^{-1/2} = M \, \Omega_r \big( (1-r)^{-1/2} \big).
\end{equation}

\paragraph{Obtaining the contradiction.} Combining our bounds for the left- and right-hand sides of \cref{eq:main_int}, that is \cref{eq:lhs_upperbound} and \cref{eq:rhs_lowerbound}, we obtain
\begin{equation*} \label{eq:contra_final}
 	M \, \Omega_r \big( (1-r)^{-1/2} \big)   \leq   M^{1/2} \, o_r \! \left( \frac{-\log^{-1/2} (1-r) }{(1-r)^{3/4}} \right) + M^2 \, O_r  \big( - \log  (1-r) \big).
\end{equation*}
Inserting \cref{eq:M}, we therefore have
\begin{equation*}
	\varepsilon \, \Omega_r \! \left( \frac{-\log^{-1}(1-r)}{1-r}\right)    \leq  \varepsilon^{1/2} o_r \! \left( \frac{-\log^{-1} (1-r) }{1-r} \right) + \varepsilon^2  O_r \! \left( \frac{-\log^{-1}(1-r)}{1-r}\right),
\end{equation*}
with the constants in $\Omega_r$, $o_r$ and $O_r$ independent of $\varepsilon > 0$. Therefore, we have that $C_3 \varepsilon \leq o_r(1) + C_4 \varepsilon ^2$, for some $C_3, C_4 > 0$. This leads a contradiction taking any $\varepsilon < C_3/C_4$, so the assumption $e_n = o_n \big( n^{1/4} \log^{-1/2} (n) \big)$ was not possible. \hfill $\square$

%%% PROOF OF MSE RESULT %%%

\section{Proof of \cref{thm:square_error}} \label{sec:P-T2}

Throughout this proof $k \geq 2$, $c \geq 0$, $\star\in\{\leq,<\}$ and $\mA \subseteq \NN_0$ are fixed. Furthermore, $z$ will always lie in $\bD$ and $r$ in the open interval $(1/2,1)$. We start by noting that if $c$ is not an integer, then the statement immediately follows since 
\begin{equation*}
	\big( \rstar_k (\mA, n) - c \big)^2 \geq \max \{ \big( c-\big\lfloor c \big\rfloor \big)^2, \big( \big\lceil c \big\rceil  - c\big)^2 \} > 0.
\end{equation*}
We can therefore assume that $c \in \NN_0$. We now prove that we can assume that there exist $D > 0$ such that $a_s < D s^k$. If $c = 0$ this is given by the statement of the theorem, so let us consider $c \geq 1$. Using the fact that $c$ and $\rstar_k (\mA, n)$ are integers, we have
\begin{align} \label{eq:1_theorem_2}
	nE^\star_{k,c}(\mA,n) &= \sum_{j=0}^n \big( \rstar_k (\mA, j) - c \big)^2 \geq \sum_{j=0}^n | \rstar_k (\mA, j) - c | \geq  \Big| c(n+1) - \sum_{j=0}^n  \rstar_k (\mA, j) \Big|.
\end{align}
%
%Observe now that $\sum_{l=0}^{ma_k} r_\ast (\mA, m; l) \geq \binom{k }{m} \geq \frac{k ^m}{m^m}$, for every $k \geq m$. That's because there are $\binom{k}{m}$ $m$-tuples of distinct numbers $\big( a_{i_1}, a_{i_2}, \ldots ,a_{i_m} \big)$ with $i_j   \leq   k$, and they add up less than $ma_k$. Taking $n = ma_k$ in equation (\cref{eq:1_theorem_2}), we see that
%
%\begin{equation*}
%\frac{t_n}{n} \geq \frac{1}{ma_k} \big( \frac{k^m}{m^m} - c - cma_k \big) = \frac{1}{m^{m+1} } \frac{k^m}{a_k}  - \frac{c}{ma_k} - c.
%\end{equation*}
%
%Either the theorem holds, or the left hand side tends to zero, implying that $\limsup_{k \to \infty} \frac{k^m}{a_k}   \leq   cm^{m+1}$. In this last case, $\frac{k^m}{a_k}$ is bounded, so we can assume $a_k \geq dk^m$ for some $d > 0$.
%
Since $a_{i_1} + a_{i_2} + \ldots + a_{i_k}   \leq   a_s$ trivially implies that every $i_j$ is at most $s$ for any $s,i_1,\ldots,i_k \in \NN$, it follows that $\sum_{j=0}^{a_s} \rstar_k (\mA, j)   \leq   s^k$. Taking $n = a_s$ in \cref{eq:1_theorem_2}, we therefore obtain
\begin{equation*}
	E^\star_{k,c}(\mA,a_s) \geq \frac{1}{a_s} \big( ca_s + c - s^k \big) = c + \frac{c}{a_s} - \frac{s^k}{a_s}.
\end{equation*}
Either the statement of the theorem holds, or $\limsup_{s \to \infty} E^\star_{k,c}(\mA,a_s) = 0$ implying that
\begin{equation*}
	\limsup_{s \to \infty} \left( c - \frac{s^k}{a_s} \right)   \leq   0.
\end{equation*}
It follows that we can assume $a_s   \leq   Ds^k$ for some appropriate $D > 0$. By \cref{eq:parseval_consequence_1} in \cref{lemma:parseval_consequence} as well as \cref{eq:rstar_genfun}, we now have
\begin{align*} \label{eq:cota_thm2_1}
	\left( \sum_{n = 0}^{\infty} \big( \rstar_k (\mA, n) - c\big)^2 \, r^{2n} \right)^{1/2} &= \left( \int_{\bS_r} \Big| \sum_{n = 0}^{\infty} \rstar_k (\mA, n) \, z^n - \frac{c}{1-z} \Big|^2 d\mu \right)^{1/2}\\
	& \geq \int_{\bS_r} \big|  \sum_{\bi \in S(k)} \varepsilon_{\ast}(\bi) f_{\mA}(z^{i_1}) \cdots f_{\mA}(z^{i_{m}}) - \frac{c}{1-z} \big| d\mu.
\end{align*}
Note that the terms with $\bi \in S_0(k)$ are negligible. Using Cauchy-Schwarz and \cref{lemma:integral_without_h}, we have that
\begin{align*}
\int_{\bS_r} \big|  \sum_{\bi \in S_0(k)} \varepsilon_{\ast}(\bi) f_{\mA}(z^{i_1}) \cdots f_{\mA}(z^{i_{m}}) \big| d\mu &\leq \sum_{\bi \in S_0(k)} \left( \int_{\bS_r} \big| f_{\mA}(z^{i_1}) \big|^m \right)^{1/m} \cdots \left( \int_{\bS_r} \big| f_{\mA}(z^{i_m}) \big|^m \right)^{1/m} \\
&= o_r \left( \int_{\bS_r} \big|  f_{\mA}(z)^k \big| d\mu \right).
\end{align*}
Now, \cref{eq:parseval_consequence_2} in \cref{lemma:parseval_consequence}, gives us
\begin{equation*}
\int_{\bS_r} \big|  \sum_{\bi \in S(k)} \varepsilon_{\ast}(\bi) f_{\mA}(z^{i_1}) \cdots f_{\mA}(z^{i_{m}}) \big| d\mu \geq \left( \frac{1}{k!} + o_r (1) \right) \int_{\bS_r} \big|  f_{\mA}(z)^k \big| d\mu  \geq \left( \frac{1}{k!} + o_r (1) \right) f_{\mA} (r^2 )^{k/2},
\end{equation*}
so that by \cref{lemma:elliptic_integral}
\begin{align*}
	\int_{\bS_r} \big|  \sum_{\bi \in S(k)} \varepsilon_{\ast}(\bi) f_{\mA}(z^{i_1}) \cdots f_{\mA}(z^{i_{m}}) - \frac{c}{1-z} \big| d\mu \geq \left( \frac{1}{k!} + o_r(1)\right) \, f_{\mA} \big( r^2 \big)^{k/2} - O_r  \big(- \log(1-r) \big).
\end{align*}
Now, taking into account that $a_s   \leq   Ds^k$ and using \cref{lemma:series_thm_2}, we have
\begin{align*}
	f_{\mA} (r^2)^{k/2} &= \left( \sum_{s = 1}^\infty r^{2a_s} \right)^{k/2} \geq \left( \sum_{s= 1}^\infty r^{2D s^k} \right)^{k/2} = \Omega_r \! \left( (1-r)^{-1/2} \right).
\end{align*}
Collecting all the bounds, it follows that
\begin{equation*}
	\sum_{n = 0}^{\infty} \big( \rstar_k (\mA, n) - c\big)^2 \, r^{2n} = \Omega_r \! \left( (1-r)^{-1/2} \right).
\end{equation*}
Therefore,
\begin{equation*}
	\sum_{n = 0}^{\infty} nE^\star_{k,c}(\mA,n) r^{2n} = \frac{1}{1-r^2} \sum_{n = 0}^{\infty} \big( \rstar_k(\mA, n) - c\big)^2 \, r^{2n} = \frac{1}{1-r^2} \, \Omega_r \left( \frac{1}{1-r^2} \right) \geq C_5 \, \sum_{n = 0}^{\infty} nr^{2n}
\end{equation*}
for some appropriate constant $C_5 > 0$. It follows that infinitely many of the coefficients $ nE^\star_{k,c}(\mA,n)$ must be greater than $ C_5n/2$ and hence $\limsup_{n \to \infty} E^\star_{k,c}(\mA,n) \geq C_5/2 > 0$ as desired. \hfill $\square$

\section{Further research}\label{sec:furt-res}

With \cref{thm:erdos_fuchs} we have established an Erd\H{o}s--Fuchs-type result for ordered representation functions, showing that an error term of the form $o(n^{1/4}\log^{-1/2} n)$ is not possible. It would be of interest to adapt the techniques in \cite{MontVau90}, see also \cite{Hay81}, in order to rule out an error term of the form $o(n^{1/4})$. However, the fact that one has to introduce several extra terms in the encoding of $r^{\star}_k (\mA,n)$ complicates this approach. 

In a different direction, the work of \cite{RoSan13}, building on a previous construction by \cite{Ruzsa97}, see also \cite{DaiPan14}, shows that for any $k \geq 2$ there exists a set $\mA_k$ and a constant $c>0$ such that $\sum_{j=0}^{n} (r_{k}(\mA_k,n)-c)=O(n^{1-3/(2k)})$, leaving a gap for $k\geq 3$. It would be very interesting to try to improve this bound for $k\geq 3$, both in the ordered and in the unordered case.

Finally, let us mention another challenging problem going back to S\'ark\"ozy and S\'os~\cite{SarkozySos_1997}. For given coefficients $k_1,\dots, k_l$ and a set $\mA \subseteq \NN_0$ , we consider the representation function 
\begin{equation*}
	r_{k_1,\dots,k_l}(\mA,n) = \# \big\{ (a_1,\dots,a_l) \in \mA^l :  k_1a_1+\dots+k_la_l = n\big\}.
\end{equation*}
In the case of $l=2$ the behaviour of $r_{k_1,k_2}(\mA,n)$ already drastically depends on the choice of the coefficients: for $(k_1,k_2)=(1,k)$ where $k>1$ Moser~\cite{Moser_1962} built an explicit set $\mA_0$ such that $r_{(1,k)}(\mA_0,n)=1$ for all value of $n$. However, for $(k_1,k_2)$ with $\gcd(k_1,k_2)=1$ and $1<k_1<k_2$,  Cilleruelo and Ru\'e shown in \cite{CillerueloRue_2009} that the corresponding representation function cannot become constant for $n$ large enough. See also \cite{RueSpiegel20} for extensions to $l$-fold sums. Consequently, obtaining an Erd\H{o}s--Fuchs-type result in this setting would require additional conditions, that is the arithmetic properties of the coefficients should play an important role in the arguments. See also~\cite{Rue_2011}, where some Erd\H{o}s--Fuchs-type results were obtained in this setting for certain types of coefficients.

%%% BIBLIOGRAPHY

\end{document}